\documentclass[letterpaper,reqno,11pt]{amsart}

\usepackage{preamble}
\usepackage{mathtools}

\usepackage{lipsum}
\usepackage{subcaption}

\title[Root polytopes, generalized tropical oriented matroids and matching ensembles]{Subdivisions of root polytopes, generalized tropical oriented matroids, and generalized matching ensembles}

\author{Tianyu Jiang, Yuan Yao, Qingyue Yu, and Chenyi Zhang}

\begin{document}

\begin{abstract}
    We study a generalization of tropical oriented matroids by Ardila and Develin, and show that they are in bijection with subdivisions of root polytopes, which are sub-polytopes of a product of two simplices. We also study a generalization of a matching-based combinatorial model by Oh and Yoo in the special case of triangulations.
\end{abstract}

\maketitle

\section{Introduction}

Tropical oriented matroids are first defined by Ardila and Develin as a combinatorial model for tropical hyperplane arrangements, as a tropical analogue of the relationship between ordinary oriented matroids and hyperplane arrangements (\cite{ardila2007tropical}). It has since been shown (see \cref{thm:bij_tom}) that these matroids are in bijection with subdivisions of a product of two simplices, an object of significant independent interest in combinatorics and algebraic geometry (\cite{GKZ1994discriminants}, \cite{postnikov2009permutohedra}).

In this paper, we study a generalization of tropical oriented matroids that are first proposed by Oh and Yoo (\cite{oh2011triangulations}), and show that they are similarly in bijection with subdivisions of root polytopes, defined by taking the convex hull of a subset of vertices of a product of two simplices. Via the Cayley trick, these subdivisions are also in bijection with mixed subdivisions of certain generalized permutohedra (\cite{postnikov2009permutohedra}).

In the case of triangulations (of product of two simplicies), there are several more additional combinatorial models, including trianguloids by Galashin et al.\ (\cite{galashin2018trianguloids}) and matching ensembles by Oh and Yoo (\cite{oh2013ensembles}). In particular, trianguloids are already generalized to model triangulations of root polytopes. Following the footsteps of previous work by the second author (\cite{yao2025linkage}), we explain how to generalize matching ensembles for root polytopes.

This paper is organized as follows. In \cref{sec:subdiv} and \cref{sec:gtom}, we recall some basic definitions regarding subdivisions of root polytopes and generalized permutohedra, (generalized) tropical oriented matroids, and mention some known connections between these objects. From \cref{sec:subdiv_to_gtom} to \cref{sec:disconn_gtom}, we prove the correspondence between generalized tropical oriented matroids and subdivisions of root polytopes, as well as a few related properties of generalized tropical oriented matroids. In \cref{sec:gen_ensem} we briefly introduce and generalize matching ensembles.

\section{Subdivisions of root polytopes}\label{sec:subdiv}

Let $n$ and $d$ be positive integers. Throughout this paper, we will frequently use the two sets of indices $[n] = \{1, 2, \dots, n\}$ and $[\bar{d}] = \{\bar{1}, \bar{2}, \dots, \bar{d}\}$, distinguished by a bar.

Let $\Delta^{d-1}$ denote the standard $(d-1)$-dimensional unit simplex in $\mathbb{R}^d$, whose vertices are $e_{\bar{1}}, e_{\bar{2}}, \dots, e_{\bar{d}}$ (here, $e_{\bar{j}} = (0, 0, \dots, 1, \dots, 0)$, with a $1$ in the $\bar{j}$-th coordinate and $0$ in other coordinates). We will also use $\Delta_{\bar{J}}$ (where $\bar{J}\subseteq [\bar{d}]$) to denote a face of this simplex of the form $\text{conv}(\{e_{\bar{j}}\mid \bar{j}\in \bar{J}\})$.

\begin{definition}[\cite{santos2003cayley}, Section 1.1]
    Let $Q$ be a polytope. A \emph{cell} of $Q$ is a sub-polytope (of the same dimension as $Q$) whose vertices are a subset of those of $Q$. A \emph{polyhedral subdivision} of $Q$ is a collection of cells which cover $Q$ and pairwise intersect properly,  meaning that $C\cap C'$ is a common face of both $C$ and $C'$ for every pair of cells $C$ and $C'$ in the subdivision. It is a \emph{triangulation} if all cells are simplices.
\end{definition}

\begin{definition}[\cite{santos2003cayley}, Section 1.2]
    Given a Minkowski sum of polytopes $P = \sum_{i=1}^n P_i$, a \emph{Minkowski cell} is a full-dimensional polytope $C$ of the form $\sum_{i=1}^n C_i$, where the vertices of $C_i$ is a nonempty subset of that of $P_i$. It is \emph{fine} if it cannot be subdivided into smaller Minkowski cells.
     
    A polyhedral subdivision of $P$ into (fine) Minkowski cells is called a \emph{(fine) mixed subdivision} of $P$ if for any two cells $C = \sum_{i=1}^n C_i$ and $C' = \sum_{i=1}^n C'_i$, we have $C \cap C' = \sum_{i=1}^n (C_i \cap C'_i)$, and $C_i \cap C'_i$ is a common face of $C_i$ and $C'_i$.
\end{definition}

In this paper we are mainly interested in subdivisions of the following polytopes.

\begin{definition}[\cite{postnikov2009permutohedra}, Sections 9 and 12]
    Let $G\subseteq K_{n,d}$ be a bipartite graph whose left vertices are indexed by $[n]$ and the right vertices are indexed by $[\bar{d}]$. Define a \emph{root polytope} $Q_G$ to be the convex hull of $\{e_i + e_{\bar{j}}\mid (i, \bar{j})\in E(G)\}$ in $\R^{n+d}$.\footnote{The original definition uses $e_i - e_{\bar{j}}$ to better resemble the type-A roots, but the resulting polytope is congruent.} Define a \emph{generalized permutohedron} $P_G$ to be the Minkowski sum $\sum_{i=1}^n \Delta_{\bar{J}_i}$, where $\bar{J}_i\subseteq [\bar{d}]$ is the set of neighbors of vertex $i$ in $G$.
\end{definition}

\begin{example}
    When $G$ is the complete bipartite graph $K_{n,d}$, $Q_G$ can also be viewed as a Cartesian product of two simplices $\Delta^{n-1}\times \Delta^{d-1}$, and $P_G$ is a dilated simplex $n\Delta^{d-1}$. In general, $Q_G$ and $P_G$ are contained in $\Delta^{n-1}\times \Delta^{d-1}$ and $n\Delta^{d-1}$, respectively.
\end{example}

\begin{figure}[h!]
    \centering
    \includegraphics[height=1.5in]{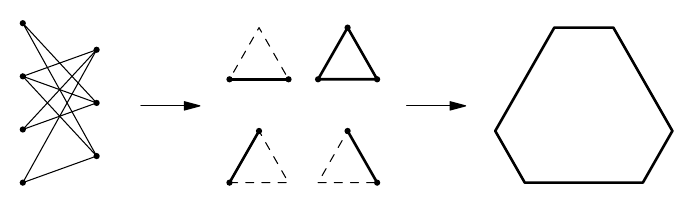}
    \caption{A graph $G$ (left) and the corresponding $P_G$ (right), formed by taking the Minkowski sums of $4$ faces of $\Delta^2$ (middle). Here $n = 4$ and $d = 3$.}\label{fig:pg}
\end{figure}

The well-known \emph{Cayley trick} connects the subdivisions of these two polytopes.

\begin{theorem}[\cite{postnikov2009permutohedra}, Corollary 14.6]
    Polyhedral subdivisions of $Q_G$ are in bijection with mixed subdivisions of $P_G$, and triangulations correspond to fine mixed subdivisions. In particular, the mixed subdivision can be obtained by intersecting the subdivision of $Q_G$ with the affine subspace $\{\frac{1}{n}\mathbf{1}_{[n]}\}\times \R^d$ and dilate the result by a factor of $n$.
\end{theorem}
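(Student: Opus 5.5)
The plan is to realize $Q_G$ as the Cayley polytope of the family $\Delta_{\bar J_1},\dots,\Delta_{\bar J_n}$ and then argue directly with the slice map. Observe that
\[
Q_G=\mathrm{conv}\Bigl(\bigcup_{i=1}^n \{e_i\}\times \Delta_{\bar J_i}\Bigr)\subseteq \R^n\times\R^d .
\]
Points of $Q_G$ have the form $\sum_i \lambda_i (e_i, x_i)$ with $x_i\in\Delta_{\bar J_i}$, $\lambda_i\ge 0$ and $\sum\lambda_i=1$. Let $H=\{\tfrac1n\mathbf 1_{[n]}\}\times\R^d$. Intersecting with $H$ forces $\lambda_i=\tfrac1n$ for every $i$, so $n(Q_G\cap H)=\sum_i\Delta_{\bar J_i}=P_G$. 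The same computation applies to any cell. If $C=\mathrm{conv}(S)$ with $S\subseteq E(G)$, write $S_i$ for the edges of $S$ at left vertex $i$ and $C_i=\mathrm{conv}\{e_{\bar j}:(i,\bar j)\in S_i\}$. Then $n(C\cap H)=\sum_i C_i$. Here $C\cap H$ is nonempty and of full dimension in $H$ exactly when every $S_i\neq\emptyset$ and $\sum C_i$ is full-dimensional in the affine span of $P_G$. A dimension count on the Cayley polytope, $\dim \mathrm{Cay}=\dim\sum C_i+n-1$, shows this happens exactly when $C$ is a full-dimensional cell of $Q_G$.

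\emph{Forward map.} Given a subdivision $\mathcal S$ of $Q_G$, map each cell $C$ to $\Phi(C)=\sum_i C_i$.
\begin{enumerate}
\item Covering: $H$ meets the relative interior of $Q_G$, and the cells cover $Q_G$, so the slices cover $Q_G\cap H$.
\item Mixed intersection condition: $C\cap C'$ is a common face $F$ of both. A face of a Cayley polytope is again a Cayley polytope of faces, $F=\mathrm{Cay}(F_1,\dots,F_n)$, and here $F_i=C_i\cap C'_i$ is a common face of $C_i$ and $C'_i$. Slicing gives $\Phi(C)\cap\Phi(C')=\sum_i(C_i\cap C'_i)$.
\end{enumerate}
The delicate point is the face lemma. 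For a face $F$ of $\mathrm{Cay}(C_1,\dots,C_n)$ cut out by a linear functional $(a,b)$, the face is $\mathrm{Cay}$ of the faces of the $C_i$ maximizing $b$, restricted to those $i$ achieving $\max_i(a_i+\max_{C_i} b)$. Some of these may be empty, and then $F\cap H=\emptyset$. I would check that this is consistent with the intersection formula: both sides are empty, since an empty summand makes a Minkowski sum empty.

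\emph{Inverse map.} Given a mixed subdivision with cells $\sum C_i$, take the cells $\mathrm{Cay}(C_1,\dots,C_n)$. The mixed condition with genuine faces $C_i\cap C'_i$ is exactly what makes these Cayley polytopes intersect in the common face $\mathrm{Cay}(C_1\cap C'_1,\dots)$.

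\emph{Covering of $Q_G$ — the main obstacle.} The slice at $\lambda$ for an arbitrary interior $\lambda$ is a weighted Minkowski sum $\sum\lambda_i\Delta_{\bar J_i}$. I need the mixed subdivision at $\lambda=\tfrac1n\mathbf 1$ to induce, through the same combinatorial cells, a subdivision at every $\lambda$. My approach is to argue that the face lattice of $\sum \lambda_i C_i$, and the pairwise intersection pattern of the cells, do not depend on $\lambda$ in the open simplex, because normal fans of Minkowski sums are common refinements independent of positive weights. Then the covering at one $\lambda$ propagates to all $\lambda$: the union of the cells is closed and full-dimensional in each slice, with no boundary inside, so it covers the slice. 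Closedness then handles the boundary of the simplex.

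Finally, the two maps are mutually inverse, since $C$ is recovered from its $C_i$ and conversely. For the triangulation statement: $\mathrm{Cay}(C_1,\dots,C_n)$ is a simplex iff $\sum_i(|C_i|-1)=\dim\sum C_i$. That is exactly the condition that the Minkowski cell admits no proper Minkowski subdivision, i.e. that it is fine. This matches triangulations with fine mixed subdivisions.
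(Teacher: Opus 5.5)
\textbf{Verdict: the slicing direction of your proposal is sound, but the converse, which is the real content of the Cayley trick, has a genuine gap.}

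The paper gives no proof of this statement. It imports it from Postnikov's Corollary 14.6, which rests on the Cayley trick of Huber--Rambau--Santos. Your slicing direction works: the identification $C\cap C'\cap(\{e_i\}\times\mathbb{R}^d)=\{e_i\}\times(C_i\cap C'_i)$, the face description of Cayley polytopes, and the dimension count together show that slicing a subdivision of $Q_G$ gives an injective map to mixed subdivisions of $P_G$. Your simplex/fine comparison is also fine.

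\textbf{First gap: the pairwise intersection claim.} You assert that the mixed condition ``is exactly what makes'' $\mathrm{Cay}(C)\cap\mathrm{Cay}(C')=\mathrm{Cay}(C_1\cap C'_1,\dots,C_n\cap C'_n)$, but you do not prove it.
\begin{itemize}
\item When every $C_i\cap C'_i$ is nonempty, the claim is true but needs an argument. Take $c$ weakly separating $\sum C_i$ from $\sum C'_i$ with contact face $\sum_i(C_i\cap C'_i)$. Additivity of support functions plus cancellation force $c$ to separate each pair $C_i,C'_i$ with contact face $C_i\cap C'_i$. Then $(x,y)\mapsto c(y)-\sum_i(\max_{C_i}c)\,x_i$ separates the two Cayley polytopes.
\item When some $C_i\cap C'_i=\varnothing$, the mixed condition only says the two Minkowski cells are disjoint at $\lambda=\frac1n\mathbf{1}$. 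That says nothing about other slices.
\end{itemize}
For example, take $n=3$, $d=4$, $S=(\{\bar{1},\bar{3}\},\{\bar{2}\},\{\bar{4}\})$ and $S'=(\{\bar{2},\bar{4}\},\{\bar{1}\},\{\bar{3}\})$. Then $\sum_i\lambda_i\Delta_{S_i}$ and $\sum_i\lambda_i\Delta_{S'_i}$ meet if and only if $\lambda_1=\lambda_2+\lambda_3$. So this pair satisfies the mixed intersection condition at the uniform slice (trivially, being disjoint there). Yet $Q_S\cap Q_{S'}$ contains a point over $\lambda=(\frac12,\frac14,\frac14)$, while $Q_{S\cap S'}=\varnothing$. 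These are not full-dimensional cells, but the example shows that no pairwise argument at one slice can prove that your inverse map produces a subdivision.

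\textbf{Second gap: the covering step.} The same example shows what goes wrong there. The normal fan of $\sum_i\lambda_iC_i$ is indeed independent of $\lambda$, but it describes each cell in isolation. It says nothing about the relative position of two different cells, and that relative position is what both ``no boundary inside'' and proper intersection depend on.

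What you need is a global argument, for instance:
\begin{itemize}
\item By the separation above (all $C_i\cap C'_i\neq\varnothing$ for facet-adjacent cells), adjacent cells stay adjacent in every slice, and boundary facets stay on the boundary.
\item Hence generic points of the slice at $\lambda$ are covered a constant number $m(\lambda)$ of times.
\item Since $\sum_{\text{cells}}\mathrm{vol}(\sum_i\lambda_iC_i)=m(\lambda)\,\mathrm{vol}(\sum_i\lambda_i\Delta_{\bar{J}_i})$ is continuous in $\lambda$ and $m=1$ at the uniform slice, $m\equiv 1$.
\item You must then still show that every pair of Cayley cells meets in a common face. This includes pairs that are disjoint at the uniform slice, like the one above.
\end{itemize}
Without these steps, surjectivity of the slicing map, and hence the bijection, is not established.
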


\begin{figure}[h!]
    \centering
    \includegraphics[height=1.5in]{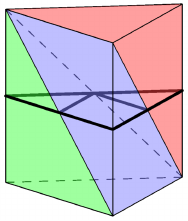}
    \hspace{1in}
    \includegraphics[height=1.4in]{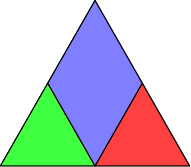}

    \caption{A subdivision of $\Delta^{1}\times \Delta^{2}$ and the corresponding subdivision of $2\Delta^{2}$.}
\end{figure}

Note that the cells of $Q_G$ and Minkowski cells of $P_G$ also take the form of $Q_{G'}$ and $P_{G'}$ respectively, where $G'$ is a spanning/connected subgraph of $G$. Therefore, subdivisions of $Q_G$ can be encoded by a collection of graphs.

\begin{proposition}[\cite{ardila2007tropical}, Lemma 6.1 and \cite{galashin2018trianguloids}, Proposition 7.5]\label{prop:facet}
    Any facet of $Q_G$ is of the form $Q_H$, where $H$ is a maximal disconnected subgraph of $G$ with two connected components $I_1\sqcup \bar{J}_1$ and $I_2\sqcup \bar{J}_2$ (where $I_1\sqcup I_2 = [n]$ and $\bar{J}_1\sqcup \bar{J}_2 = [\bar{d}]$), and $G\setminus H$ only consist of edges between $I_1$ and $\bar{J}_2$, or only edges between $I_2$ and $\bar{J}_1$. If $G\setminus H$ contains both types of edges then $Q_H$ is contained in $Q_G$ but not on the boundary.
\end{proposition}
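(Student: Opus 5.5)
Since $Q_G$ lies in the affine subspace $\{x : \sum_{i\in[n]} x_i = 1,\ \sum_{\bar j\in[\bar d]} x_{\bar j} = 1\}$, we describe its faces by supporting linear functionals. We may assume $G$ is connected, so that $Q_G$ has dimension $n+d-2$ (this is the setting in which $Q_G$ is full-dimensional as a cell). The plan is to use the standard fact that every face of $Q_G$ has the form $Q_{G_\phi}$. Here $\phi=(a_i,b_{\bar j})$ is a weight vector on the vertices of $G$, we set $M = \max_{(i,\bar j)\in E(G)} (a_i+b_{\bar j})$, and $G_\phi$ is the subgraph of edges attaining $M$.

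The first step is to compute the dimension of a face $Q_H$ for a spanning subgraph $H$. The vectors $e_i+e_{\bar j}$ for edges of a connected bipartite graph on $m$ vertices affinely span a space of dimension $m-2$. This follows from the fact that the incidence matrix of a connected bipartite graph has rank $m-1$. For a graph with $c$ components (isolated vertices count, but they contribute no points), a direct rank count gives $\dim Q_H = (\#\text{non-isolated vertices}) - c' - 1$, where $c'$ is the number of nontrivial components. Hence $Q_H$ is a facet, of dimension $n+d-3$, exactly when $H$ covers all vertices and has exactly two components. Call these components $I_1\sqcup\bar J_1$ and $I_2\sqcup\bar J_2$.

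The second step is to determine when such a two-component $H\subseteq G$ actually occurs as $G_\phi$. The weights are determined on each component up to shifts that preserve $a_i+b_{\bar j}$. Using the lineality coming from the two affine equations, we may normalize so that $a_i+b_{\bar j}=0$ on edges of $H$. We then write the weights as $a_i = \alpha_i + s$ on $I_1$, $b_{\bar j} = \beta_{\bar j} - s$ on $\bar J_1$, and $0$-shift on component 2. A cross edge $(i,\bar j)$ with $i\in I_1,\ \bar j\in\bar J_2$ then gets value $s+c_{i\bar j}$. A cross edge with $i\in I_2,\ \bar j\in\bar J_1$ gets value $-s+c'_{i\bar j}$.

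If only one type of cross edge exists, we choose $s$ very negative (or very positive) so that all cross edges fall strictly below $0$. Then $Q_H$ is a face, and by the dimension count it is a facet. Maximality of $H$ is forced, since adding any edge of $G$ inside a component keeps the same component structure. In fact, a face must be of the form $G_\phi$, which is automatically induced on the component structure, so $H$ equals $G$ restricted to $(I_1\times\bar J_1)\cup(I_2\times\bar J_2)$.

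If both types of cross edge exist, $Q_H$ is not a face. To see this we exhibit a point of $Q_H$ in the relative interior of $Q_G$. Take cross edges $(i_1,\bar j_2)$ and $(i_2,\bar j_1)$. Then
\[
(e_{i_1}+e_{\bar j_2})+(e_{i_2}+e_{\bar j_1})=(e_{i_1}+e_{\bar j_1})+(e_{i_2}+e_{\bar j_2}).
\]
This shows that a midpoint of two vertices outside $Q_H$ lies in the segment relation with points of $Q_H$, after connecting $i_1$ to $\bar j_1$ by a path in $H$ and using alternating-path identities. Consequently any supporting functional that is maximal on $Q_H$ must also be maximal on both cross edges, so it is constant on $Q_G$. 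Therefore no proper face contains $Q_H$, and $Q_H$, having codimension one, meets the interior.

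The main obstacle is the converse direction: showing that every facet has this form, and in particular that $G_\phi$ for a facet has exactly two components rather than fewer edges with more components. This is handled by the dimension formula, so the careful rank computation in the first step is the crux. The second potential subtlety is the path-based alternating-sum argument in the two-types case, which we would do by induction on path length.
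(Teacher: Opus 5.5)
The paper does not prove this proposition; it imports it from \cite{ardila2007tropical} and \cite{galashin2018trianguloids}, so there is no in-paper argument to compare against. Your proof is correct and is the standard one: every face is $Q_{G_\phi}$ for a weight vector $\phi$, a rank count gives the dimension, and you then determine which two-component subgraphs occur as maximizer sets. Connectedness of $G$ is not really a reduction you make; it is already implicit in the statement, since for $G$ with $c$ components the facets have $c+1$ components. What follows are only points to tighten.

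\emph{Isolated vertices.} In the dimension count, isolated vertices must count as components. Your formula is equivalent to $\dim Q_H = n+d-1-c(H)$, where $c(H)$ counts isolated vertices, so facets are exactly the faces with $c(H)=2$. One of the two components may then be a single vertex. An example is $H=G\setminus\{\bar j\}$, which gives the facet $x_{\bar j}=0$; for $G=K_{2,2}$, every facet of the square $Q_G$ is of this kind. The phrase ``$H$ covers all vertices'' must not be read as excluding this case.

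\emph{The weight parametrization.} Your parametrization with $\alpha_i,\beta_{\bar j},c_{i\bar j}$ hides a simplification that makes everything immediate. Suppose $a_i+b_{\bar j}=0$ on every edge of a connected component of $H$. Following paths shows that $a_i$ is a constant $t_k$ on $I_k$ and $b_{\bar j}=-t_k$ on $\bar J_k$. Three consequences follow:
\begin{itemize}
\item Every edge of $G$ inside a component has value $0$, which is exactly the maximality of $H$.
\item Cross edges have value exactly $t_1-t_2$ (type $I_1\times\bar J_2$) or $t_2-t_1$ (type $I_2\times\bar J_1$). If only one type occurs, you just pick the sign of $t_1-t_2$.
\item If both types occur, any functional maximized on all of $Q_H$ must have $t_1=t_2$. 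It is then constant on $Q_G$, so $Q_H$ lies in no proper face.
\end{itemize}

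\emph{The alternating-path argument.} Your argument does prove the two-type case, but as written (``segment relation'') it is vague. Its precise content is that $e_{i_1}+e_{\bar j_1}$ and $e_{i_2}+e_{\bar j_2}$ lie in $\mathrm{aff}(Q_H)$ via alternating path sums, so the two chosen cross vertices have $\phi$-values summing to $2M$. You also need to say that this applies to every cross edge, by pairing it with some cross edge of the other type. Only then can you conclude that $\phi$ is constant on $Q_G$.
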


\begin{definition}
    Two subgraphs $G$ and $G'$ of $K_{n,d}$ are \emph{compatible} if whenever there is a cycle $C$ in $G\cup G'$ which alternates between edges of $G$ and edges of $G'$, both $G$ and $G'$ contain $C$.\footnote{It is permissible for this cycle to repeat vertices, as it can be decomposed into two alternating cycles.}
\end{definition}

\begin{theorem}[\cite{ardila2007tropical}, Theorem 6.2, generalized]\label{thm:subdiv_graphs}
A collection of connected subgraphs $G_1, \dots, G_k$ of $G$ encodes a subdivision of $Q_G$ if and only if:
\begin{itemize} 
\item (Facet-sharing) If $H$ is a (maximal) disconnected subgraph of $G_s$ such that $Q_{H}$ is a facet of $Q_{G_s}$ but not contained in a facet of $Q_G$, then there exists another graph $G_t$ that contains $H$.
\item (Compatibility) The subgraphs are pairwise compatible.

If all subgraphs are cyclic, then $\mathcal{G}$ encodes a triangulation of $Q_G$.
\end{itemize}
\end{theorem}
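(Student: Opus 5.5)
The plan is to follow the proof of \cite{ardila2007tropical}, Theorem 6.2, and change only the parts that use $G=K_{n,d}$. Those parts are the description of facets and of boundary facets, which \cref{prop:facet} now supplies for general $G$.

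First I will translate both geometric conditions of a subdivision into graph language. The key lemma is that for connected subgraphs $G_s,G_t$ of $G$, the polytopes $Q_{G_s}$ and $Q_{G_t}$ intersect properly (in a common face) if and only if $G_s$ and $G_t$ are compatible.

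To prove this lemma, note that $Q_{G_s}\cap Q_{G_t}$ fails to be a common face exactly when there is a point with two different representations. One representation is a convex combination of vertices $e_i+e_{\bar j}$ with $(i,\bar j)\in G_s$. The other is a convex combination with $(i,\bar j)\in G_t$, and it uses some vertex outside the face of $Q_{G_s}$ spanned by $G_s\cap G_t$. Subtracting the two representations gives a signed weighting on edges that is balanced at every vertex. Such a flow decomposes into alternating cycles, with edges of $G_s$ positive and edges of $G_t$ negative. An alternating cycle not contained in both graphs is exactly a witness of incompatibility. Conversely, an alternating cycle gives two equal sums $\sum_{e\in C\cap G_s} v_e=\sum_{e\in C\cap G_t} v_e$, hence a bad intersection point. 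Both directions are a standard circulation argument and use nothing about $G$. A remark is needed on the footnote: cycles that repeat vertices split into simple alternating cycles.

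Second, assuming pairwise proper intersection, I will show that the covering condition is equivalent to facet-sharing. If the cells cover $Q_G$, take a facet $Q_H$ of $Q_{G_s}$ that is not on $\partial Q_G$. A generic point of $Q_H$ lies in the interior of $Q_G$. Points just across this facet must then lie in some other cell $Q_{G_t}$. By proper intersection, $Q_{G_t}\cap Q_{G_s}$ is a common face containing $Q_H$, and it cannot be all of $Q_{G_s}$, so it equals $Q_H$. Hence $H\subseteq G_t$.

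For the converse, I will use the standard connectivity argument. Let $U$ be the union of the cells, and suppose $U\ne Q_G$. Take a generic point $p$ of $Q_G\setminus U$ and join it by a generic segment to an interior point of some cell. The segment leaves $U$ through the relative interior of some facet $Q_H$ of some $Q_{G_s}$, avoiding all lower-dimensional faces. If this facet lay in a facet of $Q_G$, the segment would be leaving $Q_G$, which is impossible since $Q_G$ is convex. So facet-sharing provides $G_t\supseteq H$ with $Q_{G_t}$ on the other side. Here proper intersection forces $Q_{G_t}$ to lie on the opposite side of the hyperplane, so the segment continues inside $U$, a contradiction.

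The main obstacle is the claim that ``$Q_H$ is contained in a facet of $Q_G$'' is exactly the boundary condition needed. This is where \cref{prop:facet} for general $G$ is used. It requires care that $Q_H$, a facet of $Q_{G_s}$, lies in $\partial Q_G$ exactly when it sits inside one of the facets described there; since $Q_G$ is convex, a codimension-one convex set lies in $\partial Q_G$ iff it lies in a single facet, which settles this.

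Finally, for triangulations: if every $G_s$ is acyclic (a spanning tree), its vertices $e_i+e_{\bar j}$ are affinely independent, so every cell is a simplex. I read ``cyclic'' in the statement as a typo for ``acyclic''.
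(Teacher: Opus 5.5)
Your proposal is correct and takes the route the paper intends. The paper gives no proof of its own: it invokes Ardila--Develin's Theorem 6.2 with \cref{prop:facet} in place of the $K_{n,d}$ facet description, and you supply that standard argument (compatibility $\Leftrightarrow$ proper intersection via the conformal cycle decomposition of $\lambda-\mu$, covering $\Leftrightarrow$ sharing of interior facets via a generic segment, and ``cyclic'' read as ``acyclic''). Two small points: ``the face of $Q_{G_s}$ spanned by $G_s\cap G_t$'' presupposes what the lemma proves (you actually show that every representation of an intersection point uses only edges of $G_s\cap G_t$, which makes $Q_{G_s}\cap Q_{G_t}=Q_{G_s\cap G_t}$ a face of both), and the $G_s$ must be spanning connected so that the cells are full-dimensional, as your genericity argument tacitly assumes.
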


\begin{remark}\label{rem:alt_connect}
    It is not difficult to see from these conditions that if two compatible graphs $Q_{G_s}$ and $Q_{G_t}$ share a facet $Q_H$, then either $G_s$ connects $I_1$ and $\bar{J}_2$ and $G_t$ connects $I_2$ and $\bar{J}_1$ (using notation of \cref{prop:facet}), or vice versa.
\end{remark}

\section{(Generalized) tropical oriented matroids}\label{sec:gtom}

For the purpose of this paper, we consider a \emph{tropical hyperplane} to be a translated copy of the normal fan of $\Delta^{d-1}$, which lie in the \emph{tropical projective space} $\mathbb{TP}^{d-1} = \R^d / \R \mathbf{1}_{[\bar{d}]}$. Notice that a tropical hyperplane divides the space into $d$ (open) full-dimensional \emph{sectors} and $2^d-1$ \emph{faces}, which are naturally indexed by non-empty subsets of $[\bar{d}]$. 

Given $n$ tropical hyperplanes, the position of any point with respect to 

\begin{definition}[\cite{ardila2007tropical}, Definition 3.1, 3.4]
    An \emph{$(n,d)$-type} is an $n$-tuple $A = (A_1, \dots, A_n)$ of nonempty subsets of $[\bar{d}]$. Each $i\in [n]$ will be referred to as a \emph{position}. If each $A_i$ is a singleton, then $A$ is called a \emph{tope}.
    
    The \emph{refinement} of a type $A$ with respect to an ordered partition $P = (P_1, \dots, P_r)$ of $[\bar{d}]$ is a type $A_P$, obtained by replacing each coordinate $A_i$ with its intersection with the earliest part in the partition for which the intersection is nonempty. If $r = d$ (i.e.\ all parts are singletons, which equivalently form a permutation of $[\bar{d}]$) then this refinement is \emph{total}, and if $r = 2$ then this refinement is \emph{coarse}.
\end{definition}

\begin{remark}
    It is clear that an $(n,d)$-type corresponds to a subgraph $G(A)$ of $K_{n,d}$ (where none of the vertices in $[n]$ are isolated). Therefore, we will treat types as the same as bipartite graphs from now on.
\end{remark}

The axioms of tropical oriented matroids resemble the (co)vector axioms of ordinary oriented matroids.

\begin{definition}[\cite{ardila2007tropical}, Definition 3.5]
    An \emph{$(n,d)$-tropical oriented matroid (TOM)} is a collection $\mathcal{O}$ of $(n,d)$-types satisfying the following axioms: \begin{itemize}
        \item (Boundary) For each $\bar{j}\in [\bar{d}]$, the type $(\{\bar{j}\}, \dots, \{\bar{j}\})$ is in $\mathcal{O}$.
        \item (Surrounding) The collection is closed under refinements.
        \item (Comparability) The types (treated as graphs) are pairwise compatible.\footnote{The original definition uses certain comparability graphs and requires them to be acyclic; it is not difficult to see that the definition here is equivalent.}
        \item (Elimination) For any two types $A, B \in \mathcal{O}$ and a position $i\in [n]$, there exists a type $C\in \mathcal{O}$ such that $C_i = A_i\cup B_i$ and $C_{i'} \in \{A_{i'}, B_{i'}, A_{i'}\cup B_{i'}\}$ for all $i'\in [n]$. We say that $C$ is obtained by \emph{eliminating between $A$ and $B$ at position $i$}.
    \end{itemize}
\end{definition}

Similarly, deletion and contraction operations can be defined on tropical oriented matroids.

\begin{definition}[\cite{ardila2007tropical}, Proposition 4.7 and 4.8]\label{def:minors}
    Let $\mathcal{O}$ be an $(n,d)$-tropical oriented matroid.    
    For $i\in [n]$, the \emph{$i$-deletion} of $\mathcal{O}$, denoted $\mathcal{O}_{\setminus i}$, is an $(n-1, d)$-tropical oriented matroid formed by deleting the $i$-th coordinate from each type of $\mathcal{O}$.
    For $\bar{j}\in [\bar{d}]$, the \emph{$\bar{j}$-contraction} of $\mathcal{O}$, denoted $\mathcal{O}_{/\bar{j}}$, is an $(n,d-1)$-tropical oriented matroid formed by taking all types of $\mathcal{O}$ that does not contain $\bar{j}$ in any coordinate.

    For subsets $I\subseteq [n]$ and $\bar{J}\subseteq [\bar{d}]$, the \emph{$(I, \bar{J})$-minor} of $\mathcal{O}$, denoted $\mathcal{O}|_{I, \bar{J}}$, is obtained by deleting and contracting all elements in $[n]$ and $[\bar{d}]$ that are not in $I$ or $\bar{J}$, respectively. (It is clear that the deletions and contractions can be done in any order.)
\end{definition}

While not all tropical oriented matroids correspond to tropical hyperplane arrangements, they can all be represented by tropical \emph{pseudo}-hyperplane arrangements, via a tropical analogue of the Topological Representation Theorem (see \cite{Horn2016troporep}). Such arrangements can also be obtained by taking the Poincar\'{e} dual of a mixed subdivision of $n\Delta^{d-1}$.

\begin{figure}[h!]
    \centering
    \includegraphics[height=2.5in]{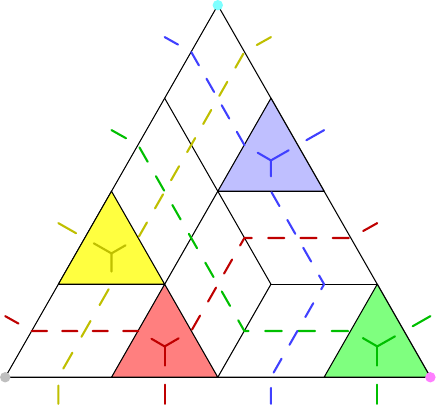}
    \caption{An arrangement of $4$ tropical pseudo-hyperplanes in $\mathbb{TP}^2$ in dashed lines, obtained from a mixed subdivision of $4\Delta^2$.}
\end{figure}

In fact, many of the axioms of tropical oriented matroids have direct geometrical interpretations in the context of mixed subdivisions: \begin{itemize}
    \item The boundary types correspond to the vertices of $n\Delta^{d-1}$, and topes correspond to all vertices of the subdivision;
    \item A subgraph $H$ is a refinement of $G$ if and only if $P_H$ is a face of $P_G$ (\cite{ardila2007tropical}, Proposition 6.4), where the vertices of $P_G$ correspond to total refinements and facets correspond to certain coarse refinements;
    \item Compatibility of types (or compatibility of connected subgraphs) corresponds to proper intersection of Minkowski cells.
\end{itemize} 

The correspondence between Elimination axiom and the Facet-sharing condition is less clear, but has been established as well. This gives a full bijection:

\begin{theorem}[\cite{ardila2007tropical}, Theorem 6.3, \cite{oh2011triangulations}, Corollary 4.13 and \cite{Horn2016troporep}, Corollary 7.12]\label{thm:bij_tom}
    There is a bijection between $(n,d)$-tropical oriented matroids and mixed subdivisions of $n\Delta^{d-1}$ (and subdivisions of $\Delta^{n-1}\times \Delta^{d-1}$), where types correspond to the faces of the mixed subdivision.
\end{theorem}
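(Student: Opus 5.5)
This is a cited result, so I will not try to rederive its full strength. I will outline how the bijection of \cref{thm:bij_tom} would be assembled from \cref{thm:subdiv_graphs} and the geometric dictionary listed above. By the Cayley trick it suffices to work with subdivisions of $Q_{K_{n,d}} = \Delta^{n-1}\times\Delta^{d-1}$.

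\textbf{Map from subdivisions to collections of types.} Given a subdivision, encoded by connected graphs $G_1,\dots,G_k$ as in \cref{thm:subdiv_graphs}, let $\mathcal{O}$ be the set of all types $H$ such that $P_H$ is a face of some Minkowski cell $P_{G_s}$. By \cite{ardila2007tropical}, Proposition 6.4, these are exactly the refinements of the $G_s$ that have no isolated left vertex.

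I would then check the axioms in order.
\begin{itemize}
\item \emph{Boundary.} The vertices $n e_{\bar j}$ of $n\Delta^{d-1}$ are vertices of the subdivision. Hence the star types $(\{\bar j\},\dots,\{\bar j\})$ appear.
\item \emph{Surrounding.} A refinement of a refinement is again a refinement, so $\mathcal{O}$ is closed under refinement.
\item \emph{Comparability.} Any alternating cycle between refinements $H\subseteq G_s$ and $H'\subseteq G_t$ is an alternating cycle between $G_s$ and $G_t$. By compatibility of $G_s$ and $G_t$ it lies in both. Since faces of cells meet in faces, it lies in $H$ and $H'$ as well.
\item \emph{Elimination.} I would argue geometrically. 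Pick generic points in the faces $P_A$ and $P_B$ and walk along the segment between them through the subdivision. Facet-sharing guarantees the walk never leaves the complex. Tracking the $i$-th Minkowski summand along the walk yields a face $C$ with $C_i\supseteq A_i\cup B_i$. Refining $C$ suitably should then give $C_i=A_i\cup B_i$ and $C_{i'}\in\{A_{i'},B_{i'},A_{i'}\cup B_{i'}\}$.
\end{itemize}

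\textbf{Inverse map.} Given a TOM $\mathcal{O}$, take its maximal types (connected graphs with $n+d-1$ edges' worth of span, i.e.\ full-dimensional $P_G$) as candidate cells.
\begin{itemize}
\item Compatibility of these cells is immediate from Comparability.
\item Facet-sharing is the real content, and I expect it to be the main obstacle. Suppose $H$ is an interior facet of a maximal $G_s$, so $H$ is a coarse refinement with components $I_1\sqcup\bar J_1$ and $I_2\sqcup\bar J_2$. I would take a tope lying on the other side of that facet and eliminate between it and $G_s$ at a position in $I_1$ or $I_2$. The goal is to produce a type containing $H$ with an edge of the opposite kind, as in \cref{rem:alt_connect}, and then enlarge it to a maximal type.
\item Covering then follows from Facet-sharing by a standard connectivity argument.
\end{itemize}
This facet-sharing step is where \cite{oh2011triangulations} and \cite{Horn2016troporep} do substantial work, via topological representation. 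I would first try induction on $n$ using deletions (\cref{def:minors}) to control the elimination.

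Finally, injectivity in both directions holds because each TOM is determined by its maximal types via Surrounding, and each subdivision is determined by its cells.
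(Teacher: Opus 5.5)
The paper does not prove this theorem; it cites it. Your outline also puts the difficulty on the wrong side.

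The direction from a tropical oriented matroid to a subdivision is due to Ardila--Develin and is purely combinatorial. Taking $G=K_{n,d}$ in \cref{sec:gtom_to_subdiv} recovers it. Starting from boundary types, one uses elimination to build a type that agrees with the disconnected type $H$ on one component and is disjoint from it at positions of the other component. One then eliminates against $H$ at those disjoint positions (\cref{prop:conn}). Doing this for each component gives two connected types containing $H$ with edges of opposite kinds, and these are distinct by \cref{rem:alt_connect}. So the step you leave open can be closed this way: your ``tope on the other side of the facet'', eliminated against $G_s$ rather than against $H$, should be replaced by this construction, and no topological representation is needed. What Oh--Yoo (for triangulations) and Horn (in general, via topological representation) supply is the converse. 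Its whole content is the Elimination axiom for the faces of an arbitrary mixed subdivision of $n\Delta^{d-1}$, and the paper uses exactly this as a black box in \cref{prop:sub_to_gtom}.

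That is where your proposal has a genuine gap. The straight-segment walk is a heuristic, not an argument. Elimination naturally lives in the dual (pseudo)hyperplane arrangement, along tropical segments, not along straight segments of the primal subdivision.
\begin{itemize}
\item When the segment crosses a wall, you only learn that the $i$-th summands of the two cells meet. Nothing then prevents the $i$-th summand from drifting from $A_i$ to $B_i$ through intermediate sets, say $\{\bar{1}\},\{\bar{1},\bar{2}\},\{\bar{2}\},\{\bar{2},\bar{3}\},\{\bar{3}\}$, with no cell containing $A_i\cup B_i$.
\item Even granting a face $C$ with $C_i\supseteq A_i\cup B_i$, refinement only shrinks coordinates. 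A cell met in the middle of the walk need not satisfy $C_{i'}\supseteq A_{i'}$ or $C_{i'}\supseteq B_{i'}$ at other positions. In that case no refinement can produce $C_{i'}\in\{A_{i'},B_{i'},A_{i'}\cup B_{i'}\}$.
\end{itemize}

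There are also two smaller points.
\begin{itemize}
\item In Comparability, ``faces of cells meet in faces'' does not show that the alternating cycle lies in $H$ and $H'$. Instead argue as follows: if $H$ refines $G_s$ by an ordered partition, the part indices of the right vertices are weakly monotone around the cycle, hence constant, which forces the cycle into $H$. The same argument works for $H'$.
\item For injectivity, Surrounding only shows that refinements of connected types lie in $\mathcal{O}$. You also need every type to be a refinement of some connected type. This follows from \cref{prop:conn} together with compatibility, not from Surrounding.
\end{itemize}
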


Motivated by this bijection, a generalization of tropical oriented matroid has been proposed with some modifications to the original axioms: 

\begin{definition}[\cite{oh2011triangulations}, Definition 5.1]
    Given a subgraph $G$ of $K_{n,d}$, a \emph{$G$-tropical oriented matroid (GTOM)} is a collection $\mathcal{O}$ of $(n,d)$-types satisfying the Surrounding, Comparability, and Elimination axioms of a tropical oriented matroid, as well as the following axioms: \begin{itemize}
        \item (Subgraph) Each type is a subgraph of $G$.
        \item (Generalized Boundary) Every total refinement of $G$ is a type in $\mathcal{O}$.
    \end{itemize}
\end{definition}

It is not difficult to see that when $G = K_{n,d}$ we recover the original definition of a tropical oriented matroid (since $(\{\bar{j}\}, \dots, \{\bar{j}\})$ can be obtained by any total refinement where the first part is $\{\bar{j}\}$). In general, total refinements of $G$ correspond to vertices of $P_G$.

In the language of tropical pseudo-hyperplane arrangements (arising as Poincar\'{e} duals of mixed subdivisions), this generalization represents moving the centers (or apexes) of some pseudo-hyperplanes to infinity in some directions, hence dividing the finite space into fewer than $d$ sectors (sometimes referred to as \emph{degeneration}).

\begin{figure}[h!]
    \centering
    \includegraphics[height=2.5in]{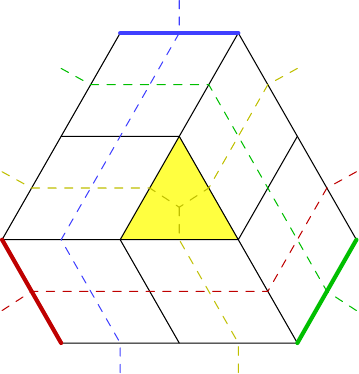}

    \caption{A fine mixed subdivision of $P_G$ (from \cref{fig:pg}) and the corresponding tropical pseudo-hyperplane arrangement. Pseudo-hyperplanes that do not correspond to full-dimensional simplices are not centered inside $P_G$.}
\end{figure}

In the following sections, we prove the following:

\begin{theorem}\label{thm:main}
    There is a bijection between $G$-tropical oriented matroids and mixed subdivision of $P_G$ (and subdivisions of $Q_G$), where types correspond to the faces of the mixed subdivision.
\end{theorem}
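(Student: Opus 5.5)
We will construct the two maps explicitly and check that they are mutually inverse, working throughout with graphs via \cref{thm:subdiv_graphs}.

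\emph{From subdivisions to GTOMs.} Let $\mathcal{G} = \{G_1,\dots,G_k\}$ encode a subdivision of $Q_G$. Define $\mathcal{O}$ to be the set of all refinements of all $G_s$. These are exactly the faces of the corresponding mixed subdivision of $P_G$, because refinements of $G_s$ correspond to faces of $P_{G_s}$ (\cite{ardila2007tropical}, Proposition 6.4).
\begin{itemize}
\item Subgraph: every type is contained in $G$.
\item Surrounding: a refinement of a refinement is again a refinement.
\item Generalized Boundary: a total refinement of $G$ is a vertex of $P_G$. That vertex lies in some cell $P_{G_s}$, hence is a vertex of it, so it equals the corresponding total refinement of $G_s$.
\item Comparability: refinements are subgraphs of the $G_s$, and compatibility passes to subgraphs because any alternating cycle between subgraphs is also alternating between the original graphs. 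Two refinements of the same $G_s$ are compatible because faces of one polytope intersect properly.
\item Elimination: this is the substantive part, handled below.
\end{itemize}

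For Elimination, given $A,B\in\mathcal{O}$ and a position $i$, we will follow the geometric argument of \cite{ardila2007tropical}, Theorem 6.3, adapted to $P_G$. Pick generic points $a$ in the relative interior of $P_A$ and $b$ in the relative interior of $P_B$, and walk along the segment from $a$ to $b$ inside the convex set $P_G$. The $i$-th summand coordinate changes continuously along the walk. A face $C$ met along the segment has $C_{i'}\subseteq A_{i'}\cup B_{i'}$. Choosing the point where the $i$-th summand touches both $A_i$ and $B_i$, we get $C_i = A_i\cup B_i$ after possibly passing to a suitable face; this is done by induction on the number of cells crossed, as in the TOM case. Convexity of $P_G$ keeps the segment inside the subdivided region. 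This is why the generalization from $n\Delta^{d-1}$ to $P_G$ costs nothing here.

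\emph{From GTOMs to subdivisions.} Given $\mathcal{O}$, let $\mathcal{G}$ be its inclusion-maximal types. By Elimination with connectivity arguments, these are connected spanning subgraphs of $G$: if a maximal type were disconnected, eliminating with a suitable total refinement would enlarge it. Compatibility is immediate from Comparability. The key step is Facet-sharing in \cref{thm:subdiv_graphs}. Let $H$ be a facet refinement of $G_s$, that is, a coarse refinement with components $I_1\sqcup\bar J_1$ and $I_2\sqcup\bar J_2$. Suppose $G$ has an edge, say between $I_2$ and $\bar J_1$, that $G_s$ lacks, so that $Q_H$ is interior by \cref{prop:facet}. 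We must produce $G_t\supseteq H$ containing such an edge. The plan is:
\begin{enumerate}
\item Take a total refinement $T$ of $G$ with an ordering that puts $\bar J_1$ before $\bar J_2$, chosen so that $T$ uses that edge. It lies in $\mathcal{O}$ by Generalized Boundary.
\item Eliminate between $H$ and $T$ repeatedly at positions in $I_2$, to grow a type containing $H$ together with an edge from $I_2$ to $\bar J_1$.
\item Use Comparability and \cref{rem:alt_connect} to ensure the resulting maximal type is not $G_s$.
\end{enumerate}
We expect this step to be the main obstacle. The elimination may also add unwanted edges, and we must show that the union still contains $H$ and stays compatible. If this direct approach fails, the fallback is to reduce to the known TOM case: add dummy hyperplanes, or embed via the minor operations of \cref{def:minors} into a $K_{n,d}$-TOM whose restriction is $\mathcal{O}$, and then apply \cref{thm:bij_tom}.

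\emph{Bijectivity.} The two constructions are inverse to each other. A subdivision is determined by its cells, which are the maximal types. Conversely, $\mathcal{O}$ is closed under refinement (Surrounding), and every type lies in a maximal one, so $\mathcal{O}$ equals the set of refinements of its maximal types. Finally, we need that every type $A\in\mathcal{O}$ is a refinement of some maximal type, and not merely a subgraph of one. We will derive this from Comparability together with Elimination: eliminate $A$ with the boundary topes to recover the ordered partition that realizes $A$ as a face.
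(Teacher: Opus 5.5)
Both directions of your proposal have a genuine gap.

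\textbf{Subdivisions to GTOMs: the Elimination step.} Ardila and Develin do not prove Elimination for faces of a subdivision. It was the conjectural direction of their correspondence, proved later by Oh--Yoo for triangulations and by Horn in general. Your key claim, that every face $C$ met by the straight segment from $P_A$ to $P_B$ has $C_{i'}\subseteq A_{i'}\cup B_{i'}$, is false already for $G=K_{2,3}$. Take the staircase mixed subdivision of $2\Delta^2$ with cells $\Delta_{\{\bar1,\bar2,\bar3\}}+e_{\bar3}$, $\Delta_{\{\bar1,\bar2\}}+\Delta_{\{\bar2,\bar3\}}$ and $e_{\bar1}+\Delta_{\{\bar1,\bar2,\bar3\}}$. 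The segment from $2e_{\bar1}$ (type $(\bar1,\bar1)$) to $e_{\bar2}+e_{\bar3}$ (type $(\bar2,\bar3)$) passes only through faces of types $(\bar1,\bar1\bar2\bar3)$, $(\bar1,\bar2\bar3)$ and $(\bar1\bar2,\bar2\bar3)$. Each of these has $\bar2$ in position $2$, although $A_2\cup B_2=\{\bar1,\bar3\}$. The actual eliminant $(\bar1\bar2,\bar3)$ is never met.

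The paper avoids any geometric proof of Elimination. It extends the subdivision of $Q_G$ to one of $\Delta^{n-1}\times\Delta^{d-1}$ by placing the missing vertices. It then invokes \cref{thm:bij_tom} to get an ordinary TOM $\mathcal{O}'$ whose types lying inside $G$ are exactly the original faces. Finally it observes that an eliminant of $A,B$ in $\mathcal{O}'$ is a subgraph of $A\cup B\subseteq G$, so it lies in $\mathcal{O}$.

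This also repairs your Comparability step, which is wrong as written. An alternating cycle between $A\subseteq G_s$ and $B\subseteq G_t$ is contained in $G_s$ and $G_t$, but your argument does not place it inside $A$ and $B$.

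\textbf{GTOMs to subdivisions: the obstacle you flag is the missing idea.} Neither your plan nor your fallback supplies it. Eliminating $H$ against a total refinement $T$ of $G$ need not preserve $H$. For $i\in I_1$, the singleton $T_i\subseteq\bar J_1$ may lie inside a larger $H_i$ or outside it. Elimination may then return $C_i=T_i\not\supseteq H_i$. The same happens at positions $i\in I_2$ with $G_i\cap\bar J_1=\varnothing$. In an ordinary TOM one uses $(\bar j,\dots,\bar j)$, which is disjoint from every $H_i\not\ni\bar j$. In a GTOM, positions with $\bar j\notin G_i$ receive other elements, so no boundary type has this property.

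The paper's key tool is the construction in \cref{prop:generate}. It labels positions level by level as agreeing or opposing, eliminates inside the smaller GTOMs $\mathcal{O}_k$, and uses \cref{lem:agree}. Together these produce a type that at every position either equals the target or is disjoint from it. Only then does repeated elimination at ``bad'' positions monotonically enlarge the type (\cref{prop:conn}). This yields two things:
\begin{itemize}
\item every type lies in a connected type (your one-line claim about maximal types has the same flaw);
\item run from each side of $H$, an interior facet lies in two types, which \cref{rem:alt_connect} forces to be distinct.
\end{itemize}

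Your fallback does not help. Minors only restrict a GTOM, and you give no way to extend a GTOM to a TOM; the paper performs such an extension only on the subdivision side, by placing.

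Finally, for disconnected $G$ the maximal types cannot be connected. The paper handles this case with the direct-sum decompositions \cref{prop:ms_decomp} and \cref{prop:gtom_decomp}, which your proposal does not address.
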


We will first prove the case where $G$ is connected (corresponding to $P_G$ being $(d-1)$-dimensional), and then show that the disconnected case can be reduced to a ``direct sum'' of connected components.

\section{From subdivisions to GTOM}\label{sec:subdiv_to_gtom}

One direction of the bijection is easy using existing tools of polytope subdivisions.

\begin{proposition}\label{prop:sub_to_gtom}
    Given a mixed subdivision of $P_G$, taking all types that correspond to faces of the mixed subdivision gives a $G$-tropical oriented matroid.
\end{proposition}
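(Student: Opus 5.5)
Given a mixed subdivision of $P_G$, let $\mathcal{O}$ be the set of types $H$ (subgraphs of $G$, with no isolated vertex in $[n]$) such that $P_H$ is a face of some Minkowski cell $P_{G_s}$ of the subdivision. The plan is to check the five axioms one by one. Subgraph, Generalized Boundary, Surrounding and Comparability should follow directly from the earlier results; Elimination needs a separate geometric argument.

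\emph{Subgraph.} Every cell is $P_{G_s}$ with $G_s\subseteq G$. A face of $P_{G_s}$ is $P_H$ for a refinement $H$ of $G_s$ (\cite{ardila2007tropical}, Proposition 6.4), so $H\subseteq G_s\subseteq G$.

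\emph{Generalized Boundary.} Let $T$ be a total refinement of $G$, given by a permutation of $[\bar{d}]$. Then $P_T$ is the vertex of $P_G$ maximizing a generic linear functional $\ell$ whose order on the coordinates is that permutation. This vertex lies in some cell $P_{G_s}$, and there it is the vertex maximizing $\ell$. Hence it equals $P_{(G_s)_\sigma}$ for the same permutation $\sigma$. Since $G_s\subseteq G$ and each $i$ picks the earliest available $\bar{j}$, the two graphs $(G_s)_\sigma$ and $G_\sigma$ agree: in position $i$ the vertex is the earliest element of $\bar{J}_i$, and it must also lie in $(G_s)_i$. I will argue this by comparing the Minkowski summands in the mixed-cell condition. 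So $T\in\mathcal{O}$.

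\emph{Surrounding.} If $H\in\mathcal{O}$ is a face of a cell, then any refinement $H_P$ gives a face of $P_H$ (Proposition 6.4 again), and faces of faces are faces.

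\emph{Comparability.} Two faces of cells intersect properly, because the mixed-subdivision condition passes to faces. By the Cayley trick, the corresponding faces $Q_H$ and $Q_{H'}$ of the cells of $Q_G$ intersect properly. The criterion in \cref{thm:subdiv_graphs} (compatibility), which is an if-and-only-if for proper intersection, then applies to faces as well, and this gives pairwise compatibility.

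\emph{Elimination.} I expect this to be the main obstacle. Given $A,B\in\mathcal{O}$ and a position $i$, pick points $a$ in the relative interior of $P_A$ and $b$ in the relative interior of $P_B$. Walk along the segment from $a$ to $b$, which stays inside the convex set $P_G$. Along it, decompose each point into summands $x_{i'}\in\Delta_{\bar{J}_{i'}}$ using the mixed-cell structure. I would imitate the proof for $n\Delta^{d-1}$ in \cite{ardila2007tropical} (Theorem 6.3), or the topological argument of \cite{Horn2016troporep}. Its key step is to find the face whose $i$-th summand has support exactly $A_i\cup B_i$ while every other summand has support in $\{A_{i'},B_{i'},A_{i'}\cup B_{i'}\}$.

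A cleaner route is to embed $P_G$ into $n\Delta^{d-1}$. Extend the given mixed subdivision of $P_G$ to a mixed subdivision of $n\Delta^{d-1}$, for example by a placing or lifting extension of the corresponding subdivision of $Q_G$ to $\Delta^{n-1}\times\Delta^{d-1}$. Then \cref{thm:bij_tom} gives Elimination in the larger TOM, producing a type $C$ built from the unions $A_{i'}\cup B_{i'}$. Since every $A_{i'}\cup B_{i'}$ lies in $G$, the type $C$ is a subgraph of $G$. I will then check that a subgraph of $G$ which is a face in the extended subdivision is already a face of a cell of $Q_G$: the subdivision of $Q_G$ is the restriction of the extension, and $Q_C\subseteq Q_G$. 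I expect the existence of the extension to be the delicate point, because an arbitrary subdivision of $Q_G$ need not be regular. For that reason I would try the extension via successive placings of the missing vertices $e_i+e_{\bar{j}}$, which always extends a subdivision.
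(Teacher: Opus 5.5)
Your proposal is correct and is essentially the paper's proof. Elimination is handled exactly as there: you extend the subdivision of $Q_G$ to one of $\Delta^{n-1}\times\Delta^{d-1}$ by successive placings, eliminate in the resulting ordinary TOM $\mathcal{O}'$, and note that the output $C\subseteq A\cup B\subseteq G$ is already a face of the original subdivision; Generalized Boundary rests on the same uniqueness of the Minkowski decomposition of a vertex of $P_G$. The only difference is that you check Subgraph, Surrounding and Comparability directly from the face structure of the cells, whereas the paper inherits them from $\mathcal{O}'$ once it observes that $\mathcal{O}$ is exactly the set of types of $\mathcal{O}'$ that are subgraphs of $G$.
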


\begin{proof}
    Using the Cayley trick, we construct a subdivision of $Q_G$ that corresponds to the given mixed subdivision, and then extend it to a subdivision of $\Delta^{n-1}\times \Delta^{d-1}$ by placing the missing vertices $v\in \Delta^{n-1}\times \Delta^{d-1}$ one at a time, coning over the ``visible'' part of the current subdivision from $v$ to get a larger subdivision. (See \cite{toth2017handbook}, Section 16.2 for precise definitions.) This gives us a mixed subdivision of $n\Delta^{d-1}$ that extends the mixed subdivision of $P_G$, and (by \cref{thm:bij_tom}) an ordinary tropical oriented matroid $\mathcal{O}'$ that contains the set $\mathcal{O}$ of all the types for the mixed subdivision of $P_G$.

    Note that $\mathcal{O}$ consists of exactly the types of $\mathcal{O}'$ that are subgraphs of $P_G$, so the Subgraph, Surrounding, and Comparability axioms are automatically satisfied for $\mathcal{O}$, inherited from $\mathcal{O}'$. For Elimination, note that if $A$ and $B$ are subgraphs of $G$ then so is $A\cup B$, so the type $C$ furnished by Elimination axiom of $\mathcal{O}'$ is also in $\mathcal{O}$, being a subgraph of $A\cup B$. Generalized Boundary axiom is also satisfied, as any total refinement of $G$ corresponds to a vertex of $P_G$, and there is a unique way to express each vertex of $P_G$ as a Minkowski sum $\sum_{i=1}^n e_{\bar{j}_i}$ where $(i, \bar{j}_i)\in E(G)$, so it must appear as a type in $\mathcal{O}$. This shows that $\mathcal{O}$ forms a $G$-tropical oriented matroid.
\end{proof}

\section{Generating types in a GTOM}\label{sec:type_in_gtom}

As mentioned in \cref{sec:gtom}, the connected types of a GTOM $\mathcal{O}$ are clearly pairwise compatible, so to show the reverse direction of \cref{thm:main} it suffices to show that they satisfy the facet-sharing condition as well. To this end, we need to develop some technical tools to show that certain types are in a GTOM using elimination starting from boundary types (and compatibility with some given type).

We first state an easy lemma, which is similar to Lemma 4.5 of \cite{ardila2007tropical}:

\begin{lemma}\label{lem:agree}
    Let $A$ and $B$ be two compatible types, and $\bar{J}\subseteq [\bar{d}]$ be a set of vertices in the same connected component of $A\cap B$. For all $i\in [n]$ for which $A_i\cap \bar{J}$ and $B_i\cap \bar{J}$ are both non-empty, these two sets are the same.
\end{lemma}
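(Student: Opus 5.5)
The plan is to argue by contradiction and produce an alternating cycle that violates compatibility. Suppose $i\in[n]$ has $A_i\cap\bar J$ and $B_i\cap\bar J$ both nonempty but different. After possibly swapping the roles of $A$ and $B$, there is some $\bar{j}\in A_i\cap\bar J$ with $\bar{j}\notin B_i$. Pick any $\bar{k}\in B_i\cap\bar J$; then $\bar{k}\neq\bar{j}$. So the edge $(i,\bar{j})$ lies in $A$ but not in $B$, and the edge $(i,\bar{k})$ lies in $B$.

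The key step is to close these two edges into a cycle. Since $\bar{j},\bar{k}\in\bar J$ lie in the same connected component of $A\cap B$, there is a path $\pi$ from $\bar{k}$ to $\bar{j}$ using only edges of $A\cap B$. Form the closed walk
\[
i\xrightarrow{B}\bar{k}\xrightarrow{\pi}\bar{j}\xrightarrow{A} i .
\]
Every edge of $\pi$ is in both $A$ and $B$, so we may label the edges of $\pi$ alternately as $A$-edges and $B$-edges. Because the graph is bipartite, $\pi$ has even length. Starting with an $A$-label right after the $B$-edge $(i,\bar{k})$, the labels alternate all the way along $\pi$. The last edge of $\pi$ then gets a $B$-label, which fits correctly before the closing $A$-edge $(\bar{j},i)$.

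This gives a closed walk that alternates between edges of $A$ and edges of $B$. If $\pi$ happens to pass through $i$, the walk repeats a vertex. The paper's footnote permits this, and alternatively we can shortcut at the first return to $i$; the bipartite parity argument still works there. By the Comparability axiom (compatibility of $A$ and $B$), both $A$ and $B$ must contain every edge of this cycle. In particular $B$ must contain $(i,\bar{j})$, which contradicts $\bar{j}\notin B_i$. Hence $A_i\cap\bar J=B_i\cap\bar J$.

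The main thing to check carefully is the parity bookkeeping that makes the walk genuinely alternate, along with the degenerate case where the path passes through $i$. Bipartiteness handles the parity, and the footnote or shortcutting handles the repeated vertex.
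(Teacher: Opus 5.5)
Your proof is correct and is essentially the paper's argument: you close the $B$-edge $(i,\bar k)$ and the $A$-edge $(i,\bar j)$ with an $A\cap B$ path from $\bar k$ to $\bar j$, label that path alternately, and let compatibility force $(i,\bar j)\in B$. The paper states this directly rather than by contradiction, and it also relies on the footnote allowing repeated vertices. One caution if you shortcut instead of invoking the footnote: keep the closed sub-walk from the visit to $i$ on the path, through $\bar j$, and back to $i$, because the piece ending at the first return to $i$ cannot contain $(i,\bar j)$.
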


\begin{proof}
    Suppose that $\bar{j}_1 \in A_i\cap \bar{J}$ and $\bar{j}_2\in B_i\cap \bar{J}$ for $\bar{j}_1\neq \bar{j}_2$, then it is clear that there is a cycle alternating between edges of $A$ and edges of $B$, using $(i, \bar{j}_1)\in E(A)$, $(i, \bar{j}_2)\in E(B)$, as well as some edges in $A\cap B$ to go from $\bar{j}_2$ back to $\bar{j}_1$ (which may revisit vertex $i$). For $A$ and $B$ to be compatible they must both contain this cycle, so these two edges must be in both graphs, as desired.
\end{proof}

We will say that $A$ and $B$ \emph{agree on $\bar{J}\subseteq [\bar{d}]$ at position $i\in [n]$} if either $A_i\cap \bar{J} = B_i\cap \bar{J}$ or at least one of the two sets are empty.

\begin{proposition}\label{prop:generate}
    Any connected type $A$ of a $G$-tropical oriented matroid $\mathcal{O}$ can be generated using a series of eliminations between types, starting from only the boundary types of $G$.
\end{proposition}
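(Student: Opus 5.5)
I would prove the statement by induction on the number of edges of $A$, or more precisely on $\sum_i (|A_i|-1)$, which is the number of edges of $A$ minus $n$. The base case is when $A$ is a tope. Since $A$ is connected, every $A_i$ is a singleton and $G(A)$ is a connected graph, so it is a spanning tree on the vertices it touches. Connectedness then forces $d=1$ in the relevant component, or more generally forces $A$ to be a total refinement of $G$. Because of this, the base case should be handled together with the inductive step: the quantity to induct on is really how far $A$ is from being a boundary type. For a general connected type $A$, the plan is to exhibit two types $B, B'\in\mathcal{O}$ that are ``closer to the boundary'' and a position $i$ such that eliminating between $B$ and $B'$ at $i$ must produce exactly $A$. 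Here ``must'' will follow from Comparability with $A$, using \cref{lem:agree}.

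The concrete construction uses coarse refinements. Choose $i$ with $|A_i|\ge 2$ and split $A_i=\bar{K}\sqcup\bar{K}'$ with both parts nonempty. Take the coarse refinements $B=A_{(\bar{K}\cup X,\ \cdot)}$ and $B'=A_{(\bar{K}'\cup X',\ \cdot)}$ for suitable ordered partitions chosen so that $B_i=\bar{K}$ and $B'_i=\bar{K}'$; both lie in $\mathcal{O}$ by Surrounding. Eliminating at $i$ gives some $C\in\mathcal{O}$ with $C_i=A_i$ and $C_{i'}\in\{B_{i'},B'_{i'},B_{i'}\cup B'_{i'}\}\subseteq$ subsets of $A_{i'}$. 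Then $C\subseteq A$ and $C$ contains the star of $i$ in $A$. Compatibility of $C$ and $A$, together with connectivity arguments in the style of \cref{lem:agree}, should force $C=A$, provided $C$ is connected. Since $A$ is compatible with $C\subseteq A$, this reduces to showing that $C$ spans and connects everything. That holds if the partitions are chosen so that $B\cup B'=A$ and each component of $B$ and of $B'$ meets the star of $i$.

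This circular setup does not directly give an induction, because $B$ and $B'$ are refinements of $A$ and need not be connected. So I would instead induct using the minors of \cref{def:minors}. I would take a disconnected refinement $B$, decompose it into components, and for each component use the corresponding minor, which is a GTOM for the induced subgraph of $G$. By induction on $n+d$, each component is generated from boundary types of that smaller graph. I would then lift these generated types back to $\mathcal{O}$ by concatenating with a fixed total refinement on the other coordinates. The delicate check is that the lifts are genuine types of $\mathcal{O}$ and that boundary types of the minor lift to boundary types of $G$. Eliminations commute with this lifting because the fixed coordinates stay constant.

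I expect the main obstacle to be the forcing step: showing that the eliminated type $C$ equals $A$ and not a proper disconnected subgraph of $A$. Compatibility alone permits $C\subsetneq A$. My first attempt would be to use Elimination repeatedly at every position $i'$ where $C_{i'}\ne A_{i'}$, each time eliminating between $C$ and $B$ or $B'$ so that the result grows monotonically toward $A$. This terminates because everything stays inside $A$ and the edge count increases. Each intermediate stage uses only types already generated, so the generation claim is preserved.
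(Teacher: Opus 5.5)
\textbf{There is a genuine gap, at exactly the step you flag as the main obstacle.} When you eliminate two refinements $B,B'$ of $A$, the result $C$ satisfies $C\subseteq A$ and $C_i=A_i$. Compatibility with $A$ then only tells you that $C$ is a face of $A$: every refinement of $A$ is in $\mathcal{O}$ by Surrounding and is compatible with $A$. The Elimination axiom only promises \emph{some} such $C$, so nothing forces $C=A$.

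Here is a concrete instance. Take $G=K_{2,3}$ and $A=(\bar1\bar2,\bar2\bar3)$, the rhombus $\Delta_{\{\bar1,\bar2\}}+\Delta_{\{\bar2,\bar3\}}$ in a mixed subdivision of $2\Delta^2$. Split $A_1$ using $P=\{\bar1,\bar3\}$, so $B=(\bar1,\bar3)$, $B'=(\bar2,\bar2)$ and $B\cup B'=A$. Then:
\begin{itemize}
\item eliminating $B,B'$ at position $1$ may return the face $(\bar1\bar2,\bar3)$;
\item your repair eliminates this with $B'$ at position $2$, which may return the face $(\bar2,\bar2\bar3)$;
\item eliminating that with $B$ at position $1$ may return $(\bar1\bar2,\bar3)$ again.
\end{itemize}
So the repair is not monotone. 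A coordinate already equal to $A_{i''}$ can fall back to $B_{i''}$ or $B'_{i''}$; here the edge count stays at $3$ and the process cycles.

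\textbf{The lifting from minors is also unjustified.} If the indices of $\bar J_k$ are ordered so that the positions in $I_k$ read off the minor's boundary types, then any position outside $I_k$ whose earliest $G$-neighbour lies in $\bar J_k$ also varies with the permutation of $\bar J_k$. So there is in general no ``fixed total refinement on the other coordinates''. You also never say how the separately generated components are assembled into the disconnected type $B$.

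\textbf{The missing idea is to eliminate against a type that is \emph{not} a subgraph of $A$}, so that compatibility has something to act on. The paper builds $A$ one index at a time.
\begin{itemize}
\item $A^{(\overline{j-1})}$ agrees with $A$ on $[\overline{j-1}]$.
\item An auxiliary type $B^{(\bar j)}$ is produced from boundary types. This uses the agreeing/opposing level decomposition $\bar J_0=\{\bar j\},\bar J_1,\dots$ and the smaller GTOMs $\mathcal{O}_k$. That decomposition is exactly what lets generation inside the minors be combined, even though opposing coordinates are only confined to $\bar J_k$ rather than fixed.
\item $B^{(\bar j)}$ coincides with $A^{(\overline{j-1})}$ at agreeing positions and is disjoint from it at opposing ones.
\end{itemize}
After eliminating at a root position, where $\bar j$ meets $[\overline{j-1}]$ in $A$, each opposing coordinate of the result is $A^{(\overline{j-1})}_i$, $B^{(\bar j)}_i$, or their union. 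Some $\bar a\in A^{(\overline{j-1})}_i$ and $\bar b\in B^{(\bar j)}_i$ are already joined through earlier positions of $A$. \cref{lem:agree} then rules out the wrong options, so the result agrees with $A$ on $[\bar j]$.

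Faces $B,B'$ of $A$ can never supply this disjointness. That is why, in your scheme, compatibility cannot distinguish $A$ from its faces.
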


The statement of this proposition may seem trivial from definition, but the explicit construction in the proof will be reused later to generate certain connected components of other types.

\begin{proof}
    For this proof we implicitly use induction on $n$ and $d$; that is, we assume that the proposition already holds for ``smaller'' GTOMs.

    By reordering the vertices in $[\bar{d}]$ if necessary, we can assume that for all $\bar{j}\in [\bar{d}]$, the vertices of $\bar{1}, \dots, \bar{j}$ are in the same connected component if the vertices $\overline{j+1}, \dots, \bar{d}$ are removed from (the graph of) $A$.

    We now construct a series of types $A^{(\bar{j})}$ for each $\bar{j} = \bar{1}, \bar{2}, \dots, \bar{d}$ such that: \begin{itemize}
        \item If $G_i$ does not contain any element in $[\bar{j}]$, then $A^{(\bar{j})}_i$ is a singleton set consisting of the smallest element of $G_i$.
        \item If $A_i$ does not contain any element in $[\bar{j}]$ but $G_i$ does, then $A^{(\bar{j})}_i$ is a subset of $[\bar{j}]$.
        \item If $A_i$ contains an element in $[\bar{j}]$, then $A^{(\bar{j})}_i = A_i\cap [\bar{j}]$.
    \end{itemize}

    It is clear that letting $A^{(\bar{1})}$ be the boundary type of $\mathcal{O}$ that is obtained with the total refinement $(\{\bar{1}\}, \{\bar{2}\}, \dots, \{\bar{d}\})$ satisfy the conditions above. For each $\bar{j}\geq \bar{2}$ we perform the following algorithm:

    {\color{gray} [Throughout this algorithm, we will also use the following example, where $\bar{j} = \bar{9}$:
    
    \[\begin{array}{cccccccccc}
        i & 1 & 2 & 3 & 4 & 5 & 6 & 7 & 8 & 9 \\
        G= & (\bar{2}\bar{9}, & \bar{1}\bar{2}\bar{4}\bar{9}, & \bar{1}\bar{2}\bar{5}, & \bar{1}\bar{2}\bar{3}\bar{5}\bar{6}, & \bar{3}\bar{4}\bar{7}, & \bar{2}\bar{3}\bar{8}, & \bar{6}\bar{7}, & \bar{2}\bar{4}\bar{7}\bar{8}, & \bar{3}\bar{8}\bar{9}),\\
        A= & (\bar{2}\bar{9}, & \bar{1}\bar{2}, & \bar{1}\bar{5}, & \bar{2}\bar{3}\bar{6}, & \bar{3}\bar{4}, & \bar{2}\bar{3}, & \bar{6}\bar{7}, & \bar{7}\bar{8}, & \bar{8}).\\
    \end{array}\]

    For this example $A^{(\bar{8})}$ is just $A$ with $\bar{9}$ missing from the first position.
    
    Every instance of this example being referred to will be marked by square brackets.]}

    \begin{figure}[h!]
        \centering
        \includegraphics[height=1.5in]{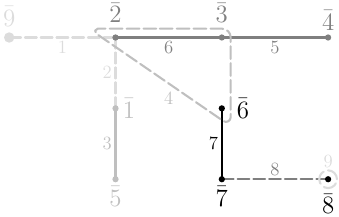}
        \caption{An example connected type $A$ to be generated via elimination, represented as a hypergraph on $[\bar{j}] = [\bar{9}]$. The positions in $I = [9]$ (represented by (hyper)edges) and indices in $[\bar{j}]$ are color-coded based on their labeled level, with darker colors corresponding to higher levels. Dashed edges correspond to opposing positions and solid edges correspond to agreeing poitions.}
    \end{figure}

    \begin{itemize}
        \item \textbf{Step 0}: Call the positions in which $A$ has at least one element in $[\bar{j}]$ \emph{active} and the rest \emph{inactive}. Let $I\subseteq [n]$ be the set of active positions. Note that $A$ restricted to $I\sqcup [\bar{j}]$ is connected by our assumption. {\color{gray} [All positions are active in the example.]}
        
        \item \textbf{Step 1}: By our assumption, there exists an active position where $\bar{j}$ appears together with an element of $[\overline{j-1}]$; we say that this position is the \emph{root position}.
        
        We label the active positions as ``opposing'' or ``agreeing'' based on $G$ and $A$: \begin{itemize}
            \item \textbf{Step 1.0}: Let $\bar{J}_0 = \{\bar{j}\}$. Label all positions $i\in I$ for which $G_i\cap \bar{J}_0\neq \varnothing$ as \emph{level-0 opposing}. Denote this set by $I_0^{\neq}$. (Note that this includes the root position.)

            {\color{gray} [In the example, the level-0 opposing positions are $\{1, 2, 9\}$.]}
            
            \item \textbf{Step 1.1}: Remove all vertices in $I_0^{\neq}$ and $\bar{J}_0$ from $A$. This may disconnect $A$ into several components. Take one of the connected components (with at least one vertex in $I\setminus I_0^{\neq}$) which is connected to $\bar{J}_0$ in $A$ via only vertices in $I_0^{\neq}$ and vertices adjacent to $I_0^{\neq}$. Such component must exist by the connectivity of $A$ in our assumption. We denote this component as $I_1^{=}\sqcup \bar{J}_1$. The positions in $I_1^{=}$ are labeled \emph{level-1 agreeing}. Label all unlabeled positions $i$ for which $G_i\cap \bar{J}_1\neq \varnothing$ as \emph{level-1 opposing}. Denote this set by $I_1^{\neq}$.

            {\color{gray} [In the example, after removing positions $\{1, 2, 9\}$ from $A$, there are two connected components, and we pick the one with vertices $3, \bar{1}, \bar{5}$, where $\bar{1}$ is connected to $\bar{9}$ via $2, \bar{2}, 1$, so $I_1^= = \{3\}$, $\bar{J}_1 = \{\bar{1}, \bar{5}\}$, and $I_1^{\neq} = \{4\}$.]}
            
            \item \textbf{Step 1.2+}: Remove vertices in $I_1^{=}, I_1^{\neq}, \bar{J}_1$ from the remainder of $A$, and define $I_2^{=}$, $I_2^{\neq}$, $\bar{J}_2$ similar to the previous step. (When choosing a connected component, we may connect it to any vertex in $\bar{J}_0$ or $\bar{J}_1$ through only vertices in $I_0^{\neq}, I_1^{=}, I_1^{\neq}$ and adjacent vertices.) Repeat until all positions in $I$ are labeled.

            {\color{gray} [In the example, we can pick $I_2^{=} = \{5, 6\}, \bar{J}_2 = \{\bar{2}, \bar{3}, \bar{4}\}$, which makes position $8$ level-2 opposing. Finally position $7$ is level-3 agreeing with $\bar{J}_3 = \{\bar{6}, \bar{7}\}$.]}
        \end{itemize}

        Note that $I_0^{\neq}, I_1^{=}, I_1^{\neq}, \dots$ form a partition of $[n]$, while $\bar{J}_0, \bar{J}_1, \dots$ are disjoint but do not necessarily form a partition of $[\bar{j}]$. Let $\bar{J}_{\infty}$ be the elements of $[\bar{j}]$ that are not covered by this process. {\color{gray} [In the example, $\bar{J}_{\infty} = \{\bar{8}\}$.]}

        \item \textbf{Step 2}: We claim that there exists a type $B^{(\bar{j})} \in \mathcal{O}$ with the following properties: \begin{itemize}
            \item For $k\geq 1$, $B^{(\bar{j})}_i = A^{(\overline{j-1})}_i \subseteq \bar{J}_k$ for each level-$k$ agreeing position $i$;
            \item For $k\geq 0$, $B^{(\bar{j})}_i$ is a nonempty subset of $\bar{J}_k$ for each level-$k$ opposing position $i$. Note these positions in $B^{(\bar{j})}$ are disjoint from the corresponding positions in $A^{(\overline{j-1})}$ unless both of them are equal to $\{\bar{j}\}$ (this happens if and only if $k = 0$ and $\bar{j}$ is the smallest element of $A$ in that position).
        \end{itemize}

        {\color{gray} [In the example, the type will have the form 
        \[\begin{array}{cccccccccc}
        i & 1 & 2 & 3 & 4 & 5 & 6 & 7 & 8 & 9 \\
        B^{(\bar{9})} = & (\bar{9}, & \bar{9}, & \mathbf{\bar{1}\bar{5}}, & \bar{1}?\bar{5}?, & \mathbf{\bar{3}\bar{4}}, &\mathbf{\bar{2}\bar{3}}, & \mathbf{\bar{6}\bar{7}}, & \bar{2}?\bar{4}?, & \bar{9}),\\
        \end{array}\]
        where the agreeing positions have been bolded.]}

        In particular, this type can be generated the set of boundary types given by a permutation starting with $\bar{j}$, followed by a permutation of $\bar{J}_1$, then a permutation of $\bar{J}_2$, \dots, a permutation of $\bar{J}_{\infty}$, and finally the elements of $[\bar{d}]\setminus [\bar{j}]$ in order. By our labeling algorithm, a level-$k$ position of such a boundary type consists of a single element from $\bar{J}_k$, and $A$ restricted to the level-$k$ agreeing positions and elements of $\bar{J}_k$ is a connected type in a smaller GTOM $\mathcal{O}_k$. The graphs corresponds to the $\mathcal{O}_k$ are not connected to each other, so we can generate $B^{(\bar{j})}$ by eliminating between the boundary types of $\mathcal{O}_k$ separately and then combining the results. (If generating each smaller connected type involves $b_k$ boundary types in $\mathcal{O}_k$, then generating $B^{(\bar{j})}$ involves $\prod_{k} b_k$ boundary types in $\mathcal{O}$.)

        {\color{gray} [In the example, the lexicographically smallest boundary type used to generate $B^{(\bar{9})}$ is \[(\bar{9}, \bar{9}, \bar{1}, \bar{1}, \bar{3}, \bar{2}, \bar{6}, \bar{2}, \bar{9}),\] by refining $G$ with permutation $(\bar{9}\ |\ \bar{1}, \bar{5}\ |\ \bar{2}, \bar{3}, \bar{4}\ |\ \bar{6}, \bar{7}\ |\ \bar{8})$. The three smaller GTOMs involve indices $\{3, \bar{1}, \bar{5}\}$, $\{5, 6, \bar{2}, \bar{3}, \bar{4}\}$, and $\{7, \bar{6}, \bar{7}\}$ respectively. Position $4$ is not fully fixed, but must be a subset of $\{\bar{1}, \bar{5}\}$ since only these two elements appear in that position among all relevant boundary types.]}

        \item \textbf{Step 3}: Now eliminate between $A^{(\overline{j-1})}$ and $B^{(\bar{j})}$ at the root position to get a new type $A'^{(\overline{j})}$. In active positions, it is clear that $A'^{(\overline{j})}$ is a subset of $[\bar{j}]$. In inactive positions, notice that all boundary types involved in generating $A^{(\overline{j-1})}$ and $B^{(\bar{j})}$ so far are refinements of $G$ that are refinements of the partition $([\bar{j}], \{\overline{j+1}\}, \dots, \{\bar{d}\})$, so these positions in $A'^{(\overline{j})}$ are all either subsets of $[\bar{j}]$ or a singleton not in $[\bar{j}]$.
        
        We now show that $A'^{(\overline{j})}$ agrees with $A$ on $[\bar{j}]$ on all active positions. Since $A$ restricted to positions in $I$ is connected, we may reorder the positions in $[n]$ such that $I = [\ell]$ and $1$ is the root position, and $A$ restricted to the first $\ell'(\leq \ell)$ positions is still connected. By our choice of connected components in the labeling algorithm, we may additionally require that all level-$k$ opposing positions and all level-$(k+1)$ agreeing positions come after level-$k$ agreeing positions in this ordering (for $k\geq 1$).

        {\color{gray} [The positions in the example are already ordered to have the above properties.]}

        We now process the active positions in numerical order: \begin{itemize}
            \item By definition of elimination, we have $A'^{(\overline{j})}_1 = A^{(\overline{j-1})}_1 \cup B^{(\bar{j})}_1 = A^{(\overline{j-1})}_1 \cup \{\bar{j}\}$, which agrees with $A_1\cap [\bar{j}]$.
            \item For an agreeing position $i > 1$, note that $A^{(\overline{j-1})}_i$ = $B^{(\bar{j})}_i$, so $A'^{(\overline{j})}_i = A^{(\overline{j-1})}_i$, which agrees with $A$ since $A_i$ does not contain $\bar{j}$ (or else $i$ would be level-0 opposing).
            \item For a (level-$k$) opposing position $i > 1$, if $A^{(\overline{j-1})}_i = B^{(\bar{j})}_i = \{\bar{j}\}$ then we have that $\{\bar{j}\}$ is the smallest element of $A_i$ so $A'^{(\overline{j})}$ agrees with $A$. Otherwise, $A^{(\overline{j-1})}_i(=A_i\cap[\overline{j-1}])$ and $B_i^{(\bar{j})} (\subseteq \bar{J}_k)$ are disjoint, and there exists an element $\bar{a}\in A^{(\overline{j-1})}_i$ that is already connected to all vertices in $\bar{J}_k$ (and hence an element $\bar{b}\in B_i^{(\bar{j})}$) through vertices that are less than $i$ in $A$ (and $A'^{(\overline{j})}$, by induction on $i$), due to our ordering of the positions. 
            
            Since $A$ and $A'^{(\bar{j})}$ are compatible, by \cref{lem:agree} they must agree on $\{\bar{a}, \bar{b}\}$ at position $i$. Since $A'^{(\bar{j})}_i \in \{A^{(\overline{j-1})}_i, B_i^{(\bar{j})}, A^{(\overline{j-1})}_i\cup B_i^{(\bar{j})}\}$, it must contain at least one of $\bar{a}$ and $\bar{b}$, and hence must contain $\bar{a}$ since $\bar{a}\in A_i$, so it must contain the entire $A^{(\overline{j-1})}_i = A_i\cap[\overline{j-1}]$. Whether $A'^{(\bar{j})}_i$ also contains $\bar{b}$ depends on whether $\bar{b} = \bar{j}$ and whether $A_i$ contains $\bar{j}$ (both can only happen if $k = 0$), but it is not difficult to see that in any case $A'^{(\bar{j})}$ agrees with $A$.

            {\color{gray} [In the example, consider position $i = 8$, which is a level-2 opposing position. By elimination $A'^{(\bar{9})}_8$ must be $A^{(\bar{5})}_8 = \{\bar{7},\bar{8}\}$, $B^{(\bar{9})}_8 \subseteq \{\bar{2},\bar{4}\}$, or a union of both. There is already a path in $A$ from $\bar{7}$ to both $\bar{2}$ and $\bar{4}$ using edges incident to vertices less than or equal to $8$, so $A'^{(\bar{9})}_8$ must agree with $A_8$ by the lemma.]}
        \end{itemize}

        Therefore, we see that $A'^{(\bar{j})}$ satisfy every condition listed before the algorithm, so we can let $A^{(\bar{j})} = A'^{(\bar{j})}$ and proceed to the next iteration.
    \end{itemize}

    At the end of the algorithm (when $\bar{j} = \bar{d}$), it is clear that we have $A^{(\bar{d})} = A$ and is generated from elimination starting with only boundary types, as desired.
\end{proof}

\begin{remark}
    The main generalization from the ordinary case is the construction of type $B^{(\bar{j})}$, which in an ordinary TOM will just be the boundary type $(\{\bar{j}\}, \dots, \{\bar{j}\})$. The desired properties listed in Step 2 necessitates the much more complex construction in Step 1 (in an ordinary TOM, all positions will be level-0 opposing).
\end{remark}

\section{From GTOM to Subdivisions}\label{sec:gtom_to_subdiv}

We are now ready to prove the other direction of the bijection.

\begin{proposition}\label{prop:conn}
    In a $G$-tropical oriented matroid $\mathcal{O}$, any type is contained in a connected type.
\end{proposition}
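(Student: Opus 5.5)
The plan is to argue by extremality together with the compatibility trick from Step 3 of the proof of \cref{prop:generate}. First reduce to $G$ connected, since the disconnected case is handled separately in \cref{sec:disconn_gtom}. Here ``connected type'' means connected as a spanning subgraph on the vertices of $G$. Given a type $A\in\mathcal{O}$, choose a type $C\in\mathcal{O}$ with $C\supseteq A$ (as graphs) that has the fewest connected components; such a $C$ exists because $\mathcal{O}$ is finite and contains $A$. Suppose for contradiction that $C$ is disconnected. Since $G$ is connected, there is an edge $(i,\bar{j})\in E(G)$ joining two different components of $C$, say $i\in X$ and $\bar{j}\in Y$, where $Y=I_Y\sqcup\bar{J}_Y$ is a component of $C$. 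The goal is a type $C'\in\mathcal{O}$ with $C'\supseteq C$ and $C'_i\ni\bar{j}$; such a $C'$ has strictly fewer components than $C$, contradicting minimality.

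To produce $C'$, first construct an auxiliary type $B\in\mathcal{O}$ as in Step 2 of \cref{prop:generate}, with $\bar{J}_Y$ playing the role of $\bar{J}_0$. Run the level-labelling algorithm on $C$, starting from the positions $i'$ with $G_{i'}\cap\bar{J}_Y\neq\varnothing$ and $i'\notin I_Y$ (these are labelled opposing, and $i$ is among them). Then build $B$ from boundary types of $G$ obtained from a total refinement whose permutation lists $\bar{J}_Y$ first, then $\bar{J}_1$, $\bar{J}_2$, and so on. The minors $\mathcal{O}_k$ on each agreeing block are used exactly as in Step 2, so that:
\begin{itemize}
\item $B$ coincides with $C$ on each agreeing position and on every position of $I_Y$ (the latter via \cref{prop:generate} applied to the connected restriction of $C$ to $Y$ in the minor $\mathcal{O}|_{I_Y,\bar{J}_Y}$);
\item $B$ is a nonempty subset of the appropriate $\bar{J}_k$ on each opposing position.
\end{itemize}
In particular, $B_i\subseteq\bar{J}_Y$ is nonempty, while $C_i\cap\bar{J}_Y=\varnothing$. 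If $B_i$ does not already contain $\bar{j}$, first pass to a refinement, or reorder $\bar{J}_Y$ so that $\bar{j}$ comes first, to arrange $\bar{j}\in B_i$.

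Next, eliminate between $C$ and $B$ at position $i$ to get $C'$ with $C'_i=C_i\cup B_i\ni\bar{j}$. It remains to check that $C'_{i'}\supseteq C_{i'}$ for every other position $i'$. On positions where $B_{i'}=C_{i'}$ this is automatic. On an opposing position $i'$, process positions in the same order as in Step 3. By induction, the $C'$-edges at earlier positions connect some $\bar{a}\in C_{i'}$ to the elements of $B_{i'}$ inside $C\cap C'$. Then \cref{lem:agree}, applied to the compatible pair $C,C'$, forces $C'_{i'}$ to contain $\bar{a}$, hence $C'_{i'}\in\{C_{i'},C_{i'}\cup B_{i'}\}$. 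Positions in components of $C$ untouched by the labelling should be handled by ordering the refinement so that $B$ equals $C$ there, using the product-of-minors construction once more.

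I expect the main obstacle to be this last verification, namely guaranteeing $C'\supseteq C$ at every position. The elimination axiom allows $C'_{i'}=B_{i'}$, which could drop edges of $C$, and \cref{lem:agree} only controls positions already linked through $C\cap C'$. The ordering of positions, and the choice to start the labelling at the component $Y$ rather than at a single vertex $\bar{j}$, must be arranged so that every opposing position is reached after some agreeing or root position has linked it. If this fails, the fallback is to relax the requirement $C'\supseteq C$ to ``$C'$ has fewer components and is compatible with $A$''. One would then recover a type containing $A$ by eliminating $C'$ with $A$ itself at positions of $A$, using \cref{lem:agree} to show that no edges of $A$ are lost.
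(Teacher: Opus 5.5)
\paragraph{There is a genuine gap.} Your auxiliary type $B$ is built as in the paper. It has the property that matters: at every position $i'$, $B_{i'}$ either equals $C_{i'}$ or is disjoint from it.

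The gap is the single elimination at $i$. It does not give $C'\supseteq C$, and \cref{lem:agree} cannot force it to. Take a level-0 opposing position $i'\neq i$. Then $C_{i'}$ lies in a component of $C$ other than $Y$, while $B_{i'}\subseteq\bar{J}_Y$. So no element of $C_{i'}$ is joined to $B_{i'}$ in $C$, and hence not in $C\cap C'$ either; the new edge at $i$ lies in $C'$ but not in $C$.

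The claim itself is false. Take the ordinary $(3,2)$-tropical oriented matroid of three points $p_1<p_3<p_2$ in $\mathbb{TP}^1$. Its types are $(\bar{1},\bar{1},\bar{1})$, $(\bar{1}\bar{2},\bar{1},\bar{1})$, $(\bar{2},\bar{1},\bar{1})$, $(\bar{2},\bar{1},\bar{1}\bar{2})$, $(\bar{2},\bar{1},\bar{2})$, $(\bar{2},\bar{1}\bar{2},\bar{2})$, $(\bar{2},\bar{2},\bar{2})$. Let $C=(\bar{2},\bar{1},\bar{1})$ and use the edge $(2,\bar{2})$. Then $Y=\{1,\bar{2}\}$, positions $2$ and $3$ are level-0 opposing, and $B=(\bar{2},\bar{2},\bar{2})$. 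The only type with $\bar{1}\bar{2}$ in position $2$ is $(\bar{2},\bar{1}\bar{2},\bar{2})$. So elimination at position $2$ is forced to drop the edge $(3,\bar{1})$ of $C$.

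Your fallback still relies on \cref{lem:agree} to show nothing is lost, and that is exactly what fails.

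\paragraph{The missing idea.} Keep eliminating against the original type $C$, as the paper does. Call a position $i'$ good if the current type contains $C_{i'}$, and bad if it is disjoint from $C_{i'}$. By the same-or-disjoint property of $B$, every position starts out good or bad.

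Eliminate $C$ with the current iterate $B^{(k)}$ at a bad position. That position becomes a strict superset of $C_{i'}$. Every good position stays good, since all three allowed values contain $C_{i'}$. Every other bad position becomes either good or stays bad, never anything in between. So the number of bad positions strictly drops, and you end with a type strictly containing $C$. In the example, a second elimination at position $3$ gives $(\bar{2},\bar{1},\bar{1}\bar{2})$. This needs only the Elimination axiom, not \cref{lem:agree}.

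To close your extremal argument, you have two options:
\begin{itemize}
\item Choose $C$ inclusion-maximal among types containing $A$, instead of component-minimal.
\item Note that any compatible strict superset of $C$ has fewer components. An added edge inside a component of $C$ would close an alternating cycle not contained in $C$.
\end{itemize}

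\paragraph{A minor point.} Your step arranging $\bar{j}\in B_i$ is unjustified, since refining $B$ destroys $B=C$ on $I_Y$. It is also unnecessary: any element of $B_i\subseteq\bar{J}_Y$ already joins $X$ to $Y$.
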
 

Note that when $G = K_{n,d}$, this is Proposition 4.6 of \cite{ardila2007tropical}. The following proof is essentially a generalization following the same line of argument.

\begin{proof}
    It suffices to show that any disconnected type is strictly contained in a ``larger'' type. Take a disconnected $(n,d)$-type $A$ where one connected component is $I\sqcup \bar{J}$. We consider two cases: 

    \begin{itemize}
        \item \textbf{Case 1}: If $I = [n]$, then clearly $\bar{J}$ is a strict subset of $[\bar{d}]$. Without loss of generality we assume that $\bar{J} = [\overline{d-1}]$. Since $G$ is connected, there is a position $i$ for which $G_i$ contains both $\bar{d}$ and an element of $\bar{J}$. Using position $i$ as the root position, we can perform one iteration of the same algorithm as in the proof of the \cref{prop:generate} (with $\bar{j} = \bar{d}$, up to Step 2) to construct a type $B$ in $\mathcal{O}$ such that $B_{i'}$ is either the same as $A_{i'}$ or disjoint from $A_{i'}$ for all $i'\in [n]$. We will construct a sequence of types $B^{(0)} = B, B^{(1)}, \dots$ such that the last one strictly contains $A$ via the following algorithm for $k = 0, 1, \dots$: \begin{itemize}
            \item Call a position $i$ \emph{bad} if $A_i$ and $B^{(k)}_i$ are disjoint, and \emph{good} if $B^{(k)}_i$ contains $A_i$ (and \emph{very good} if the containment is strict). We are done if all positions are good and at least one position is very good.
            \item Eliminate between $A$ and $B^{(k)}$ at a bad position (when $k = 0$, this can be the root position) to yield type $B^{(k+1)}$. Note that the bad position must be very good after the elimination, and all good positions remain good. (Elimination will not cause any position to be neither good nor bad.)
        \end{itemize}
        Since the number of bad positions strictly decreases at each step, this algorithm terminates in finitely many steps, with the last eliminated position being very good. This yields a type that strictly contains $A$ as desired. (In fact, this type must contain $\bar{d}$ in some position, or else it cannot be compatible with $A$ due to \cref{lem:agree}.)

        \item \textbf{Case 2}: If $I \neq [n]$, then there exists a choice of connected components such that there is an edge in $G$ between $[n]\setminus I$ and $\bar{J}$. Without loss of generality we assume that $\bar{J} = [\bar{j}]$ for some $\bar{j} < \bar{d}$, and that there is only one other connected component $I'\sqcup \bar{J}'$ (where $I = [n]\setminus I, \bar{J}' = [\bar{d}]\setminus [\bar{j}]$). We can run the algorithm from the proof of \cref{prop:generate} to construct a type $A^{(\bar{j})}\in \mathcal{O}$ in that are the same as $A$ in positions of $I$. Note that this type must contain some element of $\bar{J}$ in some position of $I'$, and also that this construction works regardless of the ordering of $\bar{J}'$. Letting $\bar{J}_0 = \bar{J}$, we can again run an iteration of the algorithm (restricted to $I'$) up to Step 2 to construct a type $B$ in $\mathcal{O}$ that are the same as $A$ in positions of $I$, and are either the same or disjoint from $A$ in positions of $I'$ (with at least one position being disjoint).\footnote{The proof that the algorithm works with $\bar{J}_0$ being a set can be adapted from the original proof.} This now reduces to the previous case. (As with the previous case, the new type that strictly contains $A$ must contain an edge between $I'$ and $\bar{J}$.)
    \end{itemize}
\end{proof}

\begin{remark}
    Case 1 of the above proof is in fact the same as Case 2 with $I = \varnothing$.
\end{remark}

\begin{proof}[Proof of \cref{thm:main} for connected $G$]
    As mentioned previously, it is sufficient to verify that the connected types of a GTOM satisfy the facet-sharing condition. Take a maximal disconnected subgraph $H$ such that $Q_H$ is not contained in a facet of $Q_G$. By \cref{prop:facet}, $H$ has two connected components $I_1\sqcup \bar{J}_1$ and $I_2\sqcup \bar{J}_2$, and $G$ has edges both between $I_1$ and $\bar{J}_2$ and between $I_2$ and $\bar{J}_1$. This means that we can run Case 2 of the proof of \cref{prop:conn} with either of $(I_1, \bar{J}_1)$ and $(I_2, \bar{J}_2)$ as $(I, \bar{J})$ (and the other as $(I', \bar{J}')$) to construct two types $H_1$ and $H_2$, both containing $H$, such that $H_1$ has some edge connecting $I_1$ and $\bar{J}_2$, $H_2$ has some edge connecting $I_2$ and $\bar{J}_1$, and then extend them to two connected types. By \cref{rem:alt_connect}, these two types must be distinct, so we showed that any internal facet in the subdivision is shared by two cells, as desired.
\end{proof}

\section{GTOM for disconnected graphs}\label{sec:disconn_gtom}

We now prove that \cref{thm:main} holds for disconnected $G$, by showing that mixed subdivisions of $P_G$ and $G$-tropical oriented matroids can be separately written as ``direct sums'' of smaller mixed subdivisions and GTOMs. 

For convenience, assume that $G$ can be decomposed into two disconnected subgraphs $G_1$ and $G_2$, supported on vertices $I_1\sqcup \bar{J}_1$ and $I_2 \sqcup \bar{J}_2$ respectively.

The claim about mixed subdivisions of $P_G$ follows from a more general statement about mixed subdivisions of sums of polytopes in independent affine subspaces.

\begin{definition}\label{def:ms_sum}
    Let $P = \sum_{i\in I} P_i$ and $P' = \sum_{i'\in I'} P'_{i'}$ be two Minkowski sums of polytopes, such that $P$ and $P'$ are in independent affine subspaces of $\mathbb{R}^d$. Given mixed subdivisions $\mathcal{C}$ and $\mathcal{C}'$ of $P$ and $P'$ respectively, let the \emph{sum of the two subdivisions} $\mathcal{C} + \mathcal{C}'$ be a mixed subdivision $\hat{\mathcal{C}}$ of $\hat{P} = P + P'$ consisting of all cells in $\{C + C'\mid C\in \mathcal{C}, C'\in \mathcal{C}'\}$, with summands indexed by $I\sqcup I'$.
\end{definition}

It is not difficult to check that $\mathcal{C} + \mathcal{C}'$ satisfies the conditions of a mixed subdivision.

\begin{proposition}\label{prop:ms_decomp}
    Let $P, P', \hat{P}$ be the same as in \cref{def:ms_sum}. Any mixed subdivision of $\hat{P}$ can be (uniquely) written as a sum of a mixed subdivision of $P$ and a mixed subdivision of $P'$. Similarly, any fine mixed subdivision of $\hat{P}$ is a sum of fine mixed subdivisions.
\end{proposition}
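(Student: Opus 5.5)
The plan is to project each cell of the given mixed subdivision of $\hat{P}$ onto its two summand groups, and show that these projections form mixed subdivisions $\mathcal{C}$ of $P$ and $\mathcal{C}'$ of $P'$ whose sum is the original subdivision. Since $P$ and $P'$ lie in independent affine subspaces, we may translate so that both affine hulls pass through the origin. Then $\mathrm{aff}(\hat{P})$ is the direct sum $V\oplus V'$ of their linear spans, and every point of $\hat{P}$ decomposes uniquely as $x+x'$ with $x\in P$ and $x'\in P'$.

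\textbf{Step 1 (cells split).} Let $\hat{C}=\sum_{i\in I\sqcup I'}\hat{C}_i$ be a Minkowski cell of the given subdivision $\hat{\mathcal{C}}$, and set $C=\sum_{i\in I}\hat{C}_i$ and $C'=\sum_{i'\in I'}\hat{C}_{i'}$. Then $\hat{C}=C+C'$ with $C\subseteq P$ and $C'\subseteq P'$. Full-dimensionality of $\hat{C}$ in $V\oplus V'$ forces $\dim C=\dim V$ and $\dim C'=\dim V'$, because the dimension of $C+C'$ is $\dim C+\dim C'$ when the two spans are independent. Hence $C$ and $C'$ are Minkowski cells of $P$ and $P'$.

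\textbf{Step 2 (define $\mathcal{C}$ and $\mathcal{C}'$, check the axioms).} Let $\mathcal{C}$ be the set of all $C$ arising in Step 1, and $\mathcal{C}'$ likewise. For covering, take $x\in P$ and a generic point $x'$ in the relative interior of $P'$. The point $x+x'$ lies in some cell $C+C'$, and by uniqueness of the decomposition $x\in C$.

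For proper intersection, take cells $C_1+C_1'$ and $C_2+C_2'$ of $\hat{\mathcal{C}}$. The mixed subdivision axiom gives $(C_1+C_1')\cap(C_2+C_2')=\sum_i(\hat{C}_{1,i}\cap\hat{C}_{2,i})$, with each $\hat{C}_{1,i}\cap\hat{C}_{2,i}$ a common face. This already gives the face condition summand by summand. It remains to see that $C_1\cap C_2=\sum_{i\in I}(\hat{C}_{1,i}\cap\hat{C}_{2,i})$. Because the decomposition is unique, $(C_1+C_1')\cap(C_2+C_2')=(C_1\cap C_2)+(C_1'\cap C_2')$. Comparing the $V$-components of both expressions, after noting that $C_1'\cap C_2'$ is nonempty, yields the identity.

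Here lies the main obstacle: $C_1'\cap C_2'$ may be empty while $C_1\cap C_2$ is not. To get around this, I will use the covering argument again. Fix a generic $x'$ in $\mathrm{relint}(P')$ and only consider cells whose $C'$ contains $x'$. These cells slice $\hat{P}$ along $P+x'$, giving a polyhedral subdivision of $P+x'$ that is exactly $\mathcal{C}$. So it suffices to show that $\mathcal{C}$ does not depend on the choice of $x'$. Two cells $C+C_1'$ and $C+C_2'$ with the same $P$-part are allowed; what must be excluded is two different $P$-parts overlapping in their interiors. Such an overlap is impossible: interior points $x\in C_1\cap C_2$ together with any $x_1'\in C_1'$ and $x_2'\in C_2'$ would violate the mixed condition on the $V$-summands. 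Equivalently, the intersection formula applied summand-wise shows that $\hat{C}_{1,i}\cap\hat{C}_{2,i}$ is nonempty for every $i$ whenever the full intersection is nonempty; the step to verify carefully is that non-emptiness of the $V$-part intersection propagates.

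\textbf{Step 3 (sum and uniqueness).} Every cell of $\hat{\mathcal{C}}$ has the form $C+C'$ with $C\in\mathcal{C}$ and $C'\in\mathcal{C}'$. Both $\hat{\mathcal{C}}$ and $\mathcal{C}+\mathcal{C}'$ subdivide $\hat{P}$, and the former's cells are among the latter's, so the two coincide. Uniqueness follows because $\mathcal{C}$ is recovered as the slice at $P+x'$.

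\textbf{Step 4 (fineness).} If $\hat{C}=C+C'$ is fine and $C$ could be refined into smaller Minkowski cells $D_k$, then the cells $D_k+C'$ would refine $\hat{C}$, a contradiction. Conversely, a sum of fine cells is fine, since any Minkowski subcell of $C+C'$ splits by Step 1.
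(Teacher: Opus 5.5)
\textbf{Gap in Step 2.} Your Steps 1, 3 and 4 are fine. Your overall architecture is also the same as the paper's: slice $\hat{\mathcal{C}}$ along $P+x'$ to get $\mathcal{C}$, show that every $P$-part of a cell lies in this slice, then finish by covering/volume. The paper slices along the face $P+v'$ for a vertex $v'$ of $P'$ instead of a generic $x'$.

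However, the step you flag yourself is left unproved, and it carries the proposition: if $\hat C_1=C_1+C_1'$ and $\hat C_2=C_2+C_2'$ are cells whose $P$-parts overlap in their interiors, then $C_1=C_2$ as Minkowski cells. Your justification does not work. When $C_1'\cap C_2'=\varnothing$, we have $\hat C_1\cap\hat C_2=\varnothing$. Some $I'$-summand intersection must then be empty, so the mixed condition for this pair constrains the $I$-summands only through the requirement that each $\hat C_{1,i}\cap\hat C_{2,i}$ be a common face.

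Concretely, take $P_1=P_2=[0,1]$. The Minkowski cells $[0,1]+\{0\}$ and $\{0\}+[0,1]$ are equal as sets but not as Minkowski cells. Lifting them with disjoint cells $C_1',C_2'$ of a mixed subdivision of $P'$ gives a pair that satisfies the mixed condition. The points $x+x_1'$ and $x+x_2'$ lie in different cells, so no single pair condition is violated. Your ``equivalently'' sentence only applies when the full intersection is nonempty, which is exactly the case not at issue. The paper's own proof also passes over this point with a one-line ``clearly''.

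\textbf{How to close it.} The missing idea is a connectivity argument inside one fiber.
\begin{itemize}
\item Pick $x\in\mathrm{int}\,C_1\cap\mathrm{int}\,C_2$ avoiding the boundaries of the $P$-parts of all (finitely many) cells.
\item The fiber $x+P'\subseteq\hat P$ is convex and covered by the closed sets $\hat D\cap(x+P')$. Hence there is a chain $\hat C_1=\hat D_0,\hat D_1,\dots,\hat D_m=\hat C_2$ in which consecutive cells share a point $x+y_k'$ of the fiber.
\item Each $D_k$ contains $x$, and by the choice of $x$ it contains $x$ in its interior. Also $y_k'\in D_k'\cap D_{k+1}'$, so the mixed condition now applies nontrivially.
\item $\hat D_k\cap\hat D_{k+1}=(D_k\cap D_{k+1})+(D_k'\cap D_{k+1}')$ is a common face. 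Since faces of $D_k+D_k'$ are sums of faces, $D_k\cap D_{k+1}$ is a face of both $D_k$ and $D_{k+1}$ containing the interior point $x$. Hence $D_k=D_{k+1}$ as sets.
\item Taking $P$-components of the summand-wise identity gives $D_k=\sum_{i\in I}(D_{k,i}\cap D_{k+1,i})$. Since $D_{k,i}\cap D_{k+1,i}\subseteq D_{k,i}$, comparing support functions forces $D_{k,i}=D_{k+1,i}$ for all $i\in I$.
\end{itemize}
Thus $C_1=C_2$ as Minkowski cells.

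Now take any cell $C+C'$ and a generic $x\in\mathrm{int}\,C$, and let $\hat C_0$ be a cell containing $x+x'$. The argument above gives $C=C_0$, so every $P$-part lies in the slice $\mathcal{C}_{x'}$. This slice is a mixed subdivision, because any two of its cells have $P'$-parts meeting at $x'$. So $\mathcal{C}$ does not depend on $x'$, and the rest of your argument goes through.
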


\begin{proof}
    Let $v'$ be any vertex of $P'$. A mixed subdivision $\hat{\mathcal{C}}$ of $\hat{P} = P + P'$ induces a mixed subdivision of $P + \{v'\}$ (as a face of $\hat{P}$), so we can obtain a mixed subdivision $\mathcal{C}$ of $P$, and similarly a mixed subdivision $\mathcal{C}'$ for $P'$. 
    
    For any cell $\hat{C} = C + C'$ in $\hat{\mathcal{C}}$ (where $C$ corresponds to the summands indexed by $[n]$), if $C$ is not a cell of $\mathcal{C}$, then there is a cell $C_0$ in $\mathcal{C}$ that it does not intersect properly with (as Minkowski cells of $P$). We can find a cell $\hat{C}_0 \in \hat{\mathcal{C}}$ that contains the face $C_0 + v'$, and hence contains $C_0$ as part of the Minkowski sum; this cell clearly does not intersect properly with $\hat{C}$ as Minkowski cells, which contradicts the fact that $\hat{\mathcal{C}}$ is a mixed subdivision. Therefore, all cells in $\hat{\mathcal{C}}$ belongs to $\mathcal{C} + \mathcal{C}'$, and by a standard volume argument the reverse also holds.

    Showing that the sum is unique and the specialization to fine mixed subdivisions is easy.
\end{proof}

Now, notice that $P_G = P_{G_1} + P_{G_2}$, where $P_{G_1}$ lies in the affine subspace $\sum_{\bar{j}\in \bar{J}_1} x_{\bar{j}} = |I_1|$ and $x_{\bar{j}} = 0$ for all $\bar{j}\notin \bar{J}_1$, which is clearly independent from the one in which $P_{G_2}$ lies. By \cref{prop:ms_decomp} this means that any mixed subdivision of $P_G$ is the sum of a mixed subdivision of $P_{G_1}$ and one of $P_{G_2}$. 

For $G$-tropical oriented matroids, we first prove that deletion and contraction from usual tropical oriented matroids generalize to this case (regardless of connected-ness of $G$).

\begin{proposition}
    Using the same definition as in \cref{def:minors}, $G$-tropical oriented matroids are closed under taking $(I, \bar{J})$-minors (as long as the $(I\sqcup \bar{J})$-induced subgraph of $G$ has no isolated vertices in $I$).
\end{proposition}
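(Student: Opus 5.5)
Since deletions and contractions commute and an $(I,\bar J)$-minor is a composition of single-element operations, I would reduce to two basic cases: a single deletion $\mathcal{O}_{\setminus i}$ and a single contraction $\mathcal{O}_{/\bar j}$. In each case I would check that the resulting collection is a $G'$-tropical oriented matroid, where $G'$ is $G$ with vertex $i$, respectively $\bar j$, removed. There is one subtlety: the hypothesis must hold at every intermediate stage. To arrange this, I would perform all deletions first and then the contractions. After deletions the induced graph on $I\sqcup[\bar d]$ has no isolated $I$-vertices, since $G$ has none. Each contraction only removes right vertices, and isolated left vertices cannot appear until the final graph, which by hypothesis has none. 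So every intermediate graph is admissible.

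\textbf{Deletion.} Surrounding holds because refining a type and then dropping coordinate $i$ equals dropping coordinate $i$ and then refining. Comparability holds because an alternating cycle in $A_{\setminus i}\cup B_{\setminus i}$ is an alternating cycle in $A\cup B$ avoiding $i$, and so it is contained in both. Elimination at a position $i'\neq i$ is just elimination in $\mathcal{O}$ followed by dropping coordinate $i$. Subgraph is immediate. For Generalized Boundary, a total refinement of $G_{\setminus i}$ by a permutation $\sigma$ is the deletion of the total refinement of $G$ by $\sigma$, which lies in $\mathcal{O}$.

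\textbf{Contraction.} Let $\mathcal{O}_{/\bar j}$ be the set of types avoiding $\bar j$, and let $G'=G\setminus\bar j$, which by assumption has no isolated $[n]$-vertices. Subgraph and Comparability are inherited. Surrounding holds because refinements of a type avoiding $\bar j$ still avoid $\bar j$: append $\bar j$ anywhere in the ordered partition and the refinement is unchanged. Elimination holds because $C$ is contained in $A\cup B$, so $C$ avoids $\bar j$. For Generalized Boundary, take a permutation $\sigma$ of $[\bar d]\setminus\{\bar j\}$ and put $\bar j$ last. Since every $G_i$ contains some element other than $\bar j$, the resulting total refinement of $G$ never selects $\bar j$. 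It therefore lies in $\mathcal{O}_{/\bar j}$ and equals the total refinement of $G'$ by $\sigma$.

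I expect essentially no obstacle. The one point needing care is precisely the no-isolated-vertex hypothesis: it is what guarantees that putting $\bar j$ last in the refinement avoids $\bar j$ at every position. It is also what makes the contracted objects genuine types, with all coordinates nonempty. This is why I handle the ordering of the reduction explicitly at the start.
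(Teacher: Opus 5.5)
Your proposal is correct and follows essentially the same route as the paper. You reduce to single deletions and contractions in an order that never isolates a left vertex, treat the Subgraph, Surrounding, Comparability and Elimination axioms as inherited, and check Generalized Boundary by reusing $\sigma$ for deletion and placing $\bar j$ last for contraction; your explicit order (all deletions first, then contractions) and your written-out axiom checks just make concrete what the paper states briefly.
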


\begin{proof}
    Note that it is always possible to remove a given subset of vertices in $G$ in some order that never creates an isolated vertex in $[n]$ during the process, so it suffices to show that GTOMs are closed under deletion and contraction. The Subgraph axiom is trivially satisfied since these operations simply remove vertices from $G$. The preservation of Surrounding, Comparability, and Elimination axioms is proved identically as with the proof for ordinary tropical oriented matroids. Therefore the only nontrivial axiom to check is (Generalized) Boundary.

    Suppose that $T$ is a boundary type of $G_{\setminus i}$ (obtained by removing vertex $i$), obtained by a permutation (or a total refinement) $\sigma$ of $[\bar{d}]$. Since vertex $i$ is connected to at least one vertex in $[\bar{d}]$ in $G$, the same permutation $\sigma$ refines $G$ to a boundary type $\hat{T}$ which will become $T$ under the $i$-deletion. Therefore, all required boundary types appear after an $i$-deletion.

    Now suppose that $T$ is a boundary type of $G_{/\bar{j}}$ (obtained by removing vertex $\bar{j}$), obtained by a permutation $\sigma$ of $[\bar{d}]\setminus \{\bar{j}\}$. Note that for any vertex $i\in [n]$, if its only neighbor is $\bar{j}$ then $i$ is isolated in $G_{/\bar{j}}$, so we can assume that every vertex in $[n]$ has a neighbor that is not $\bar{j}$. Therefore, if we let $\hat{\sigma}$ be a permutation of $[\bar{d}]$ by appending $\bar{j}$ at the end of $\sigma$, then refining $G$ by $\hat{\sigma}$ gives $T$, which will also appear after $\bar{j}$-contraction. Therefore, all required boundary types appear after a $\bar{j}$-contraction.
\end{proof}

\begin{definition}\label{def:gtom_sum}
    Let $\mathcal{O}$ and $\mathcal{O}'$ be two generalized tropical oriented matroids, whose corresponding graphs $G$ and $G'$ are supported on two disjoint sets of vertices $I\sqcup \bar{J}$ and $I'\sqcup \bar{J}'$. The \emph{sum of two generalized tropical oriented matroids} $\mathcal{O} + \mathcal{O}'$ is a $(G\sqcup G')$-tropical oriented matroid $\hat{\mathcal{O}}$ consisting of all types in $\{(T, T')\mid T\in \mathcal{O}, T'\in \mathcal{O}'\}$.
\end{definition}

It is not difficult to check that $\mathcal{O} + \mathcal{O}'$ satisfies the axioms of a generalized tropical oriented matroid. Using a similar argument as \cref{prop:ms_decomp}, we can show the following:

\begin{proposition}\label{prop:gtom_decomp}
    Let $G$ and $G'$ be the same as in \cref{def:gtom_sum}. Any $(G\sqcup G')$-tropical oriented matroid $\hat{\mathcal{O}}$ can be (uniquely) written as a sum of a $G$-tropical oriented matroid and a $G'$-tropical oriented matroid.
\end{proposition}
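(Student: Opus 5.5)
We propose to mimic the argument of \cref{prop:ms_decomp}, with boundary types playing the role of the vertex $v'$ and the minors of the previous proposition playing the role of the face $P+\{v'\}$. Write $\hat G = G\sqcup G'$ and, for a type $\hat T$ of $\hat{\mathcal{O}}$, write $\hat T = (T, T')$ for its restrictions to positions in $I$ and in $I'$. By the Subgraph axiom, $T$ only uses elements of $\bar J$ and $T'$ only uses elements of $\bar J'$.

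\textbf{Step 1: the two factors.} We take $\mathcal{O} = \hat{\mathcal{O}}|_{I,\bar J}$ and $\mathcal{O}' = \hat{\mathcal{O}}|_{I',\bar J'}$. These are the minors obtained by deleting $I'$ and contracting $\bar J'$, and symmetrically. By the previous proposition they are a $G$-tropical oriented matroid and a $G'$-tropical oriented matroid. No isolated vertices are created, because $\hat G$ has no edges between $I$ and $\bar J'$.

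Concretely, a type of $\mathcal{O}$ is the $I$-part $T$ of a type $\hat T\in\hat{\mathcal{O}}$ whose $I'$-part avoids $\bar J'$. Since $\hat T\subseteq \hat G$, an $I'$-part avoiding $\bar J'$ would have to be empty. So I would first check carefully that contraction of $\bar J'$ here means deleting the coordinates of $I'$ as well. The cleanest route is to perform the deletion of $I'$ first and the contraction second, in the order allowed by the previous proposition. After deleting $I'$, the contraction of $\bar J'$ removes no types, so $\mathcal{O}$ is simply $\{T : (T,T')\in\hat{\mathcal{O}}\}$.

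\textbf{Step 2: $\hat{\mathcal{O}} \supseteq \mathcal{O}+\mathcal{O}'$.} Given $T\in\mathcal{O}$ and $T'\in\mathcal{O}'$, we choose witnesses $(T,S')$ and $(S,T')$ in $\hat{\mathcal{O}}$. Because the two blocks share no elements of $[\bar d]$, we use refinement to decouple them. Pick an ordered partition that lists all elements of $\bar J$ as one block first and $\bar J'$ as a block after it. Refining $(S,T')$ by $(\bar J, \bar J')$ leaves it unchanged, since every position already meets only one block, so refinement alone does not help.

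Instead we use elimination, which we expect to be the main obstacle. We eliminate between $(T,S')$ and $(S,T')$ at some position of $I'$ to obtain $C=(C_I, C_{I'})$ with coordinates in the allowed sets. Comparability then pins down coordinates: $C$ must be compatible with both witnesses, and \cref{lem:agree} applied within each component forces agreement. A cleaner alternative is to iterate the good/bad elimination of Case 1 of \cref{prop:conn}. We keep $(T,S')$ fixed and repeatedly eliminate with $(S,T')$ at positions of $I'$ where the current type differs from $T'$, hoping to reach $(T,T')$. To prevent the $I$-part from drifting away from $T$, first reduce to the case where $T$ and $T'$ are topes by Surrounding; every type is a union of its total refinements, and topes are easier to control. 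Then use elimination to climb back up to $T$ and $T'$.

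\textbf{Step 3: $\hat{\mathcal{O}}\subseteq\mathcal{O}+\mathcal{O}'$, and uniqueness.} This inclusion is immediate from the description of $\mathcal{O}$ and $\mathcal{O}'$ as projections in Step 1. For uniqueness, suppose $\hat{\mathcal{O}} = \mathcal{P}+\mathcal{P}'$. Projecting to $I$ recovers $\mathcal{P}$, since $\mathcal{P}'$ is nonempty by Generalized Boundary. Similarly, projecting to $I'$ recovers $\mathcal{P}'$.
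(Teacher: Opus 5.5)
\textbf{There is a genuine gap in Step 2.} Your Steps 1 and 3 are fine:
\begin{itemize}
\item Deleting $I'$ before contracting $\bar J'$ is the right order.
\item The projections $\mathcal{O},\mathcal{O}'$ are then a $G$- and a $G'$-tropical oriented matroid by the minor proposition.
\item $\hat{\mathcal{O}}\subseteq\mathcal{O}+\mathcal{O}'$ and uniqueness are immediate.
\end{itemize}
But with projections as the factors, the whole content of the proposition sits in Step 2: the inclusion $\mathcal{O}+\mathcal{O}'\subseteq\hat{\mathcal{O}}$, i.e.\ that $(T,S'),(S,T')\in\hat{\mathcal{O}}$ imply $(T,T')\in\hat{\mathcal{O}}$. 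Step 2 is a list of hopes, not an argument, and none of the mechanisms you name works as stated.

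\emph{Comparability cannot help.} Since $G\sqcup G'$ has no edges between the blocks, every alternating cycle lies in one block. So $(X,X')$ and $(Y,Y')$ are compatible if and only if $X,Y$ are and $X',Y'$ are. Comparability therefore never couples the halves, and \cref{lem:agree} cannot force $C_k=T_k$ when elimination may return $S_k$ or $S_k\cup T_k$ there. Concretely, take $G=G'=K_{1,2}$ (positions $1,2$; $\bar J=\{\bar1,\bar2\}$, $\bar J'=\{\bar3,\bar4\}$). The axioms force $\hat{\mathcal{O}}$ to contain all nine types. Eliminating $(\bar1,\bar3)$ and $(\bar2,\bar4)$ at position $2$ may legitimately return $(\bar2,\bar3\bar4)$, which is compatible with everything and has lost $T=\bar1$ entirely.

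\emph{The good/bad iteration does not help.} The iteration of \cref{prop:conn} only produces a type \emph{containing} a given one, with unions at every eliminated position and no control of the $I$-part.

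\emph{Reducing to topes does not help.} A type is not determined by its total refinements. For $K_{2,2}$, the types $(\bar1,\bar1)$ and $(\bar2,\bar2)$ lie in every such tropical oriented matroid, yet $(\bar1\bar2,\bar1\bar2)$ is a type only for the trivial subdivision. So ``climbing back up'' is the unproved claim itself.

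\textbf{What is missing} is a decoupling statement, which has to come from Elimination and Generalized Boundary. One workable route:
\begin{itemize}
\item For $T\in\mathcal{O}$, consider the fiber $\mathcal{F}_T=\{X' : (T,X')\in\hat{\mathcal{O}}\}$. It is closed under refinement: refine $(T,X')$ by $(\bar J,P'_1,P'_2,\dots)$. It is closed under elimination at positions of $I'$: two types with $I$-part $T$ eliminate to one with $I$-part $T$. Hence it is a $G'$-tropical oriented matroid once it contains every boundary tope $b'$ of $G'$.
\item For connected $T$, that can be obtained by running the algorithm of \cref{prop:generate} with all of $\bar J$ ordered before $\bar J'$, and $\bar J'$ ordered by the permutation defining $b'$. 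Every boundary type used then has $I'$-part $b'$, eliminations preserve it, and the compatibility steps only need compatibility of $I$-parts with the witness $(T,X')$.
\item You must then still show $\mathcal{F}_T=\mathcal{O}'$, e.g.\ by running \cref{prop:generate} inside $\mathcal{F}_T$ using compatibility within $\mathcal{O}'$.
\item Finally, pass from connected to arbitrary types.
\end{itemize}

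\textbf{Comparison with the paper.} The paper only indicates that the argument of \cref{prop:ms_decomp} carries over. There the factor is read off from the face at a fixed vertex $v'$, one inclusion comes from proper intersection (Comparability), and the other from a volume/covering argument. Your projections make the Comparability half automatic, but they leave the covering half unaddressed, and that half is the substantive one that needs Elimination.
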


Therefore, \cref{thm:main} for disconnected $G$ follows from ``taking a sum of bijections for each connected component of $G$''.

\section{Generalized matching ensembles}\label{sec:gen_ensem}

If we restrict ourselves to the case of triangulations of root polytopes, the graphs that correspond to simplices of the triangulation will all be spanning trees of $G$ (spanning forests if $G$ is disconnected). In particular, this means that the compatibility condition in \cref{thm:subdiv_graphs} can be simplified significantly:

\begin{proposition}
    Two acyclic subgraphs of $G$ are compatible if and only if whenever they both contain a perfect matching between two sets of vertices $I\subseteq [n]$ and $\bar{J}\subseteq [\bar{d}]$ (of equal size), the two matchings are the same.
\end{proposition}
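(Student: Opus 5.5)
We prove the two implications separately, working directly with the definition of compatibility: $G_1$ and $G_2$ are compatible if every cycle alternating between edges of $G_1$ and edges of $G_2$ is contained in both graphs. The bridge between the two notions is that an alternating cycle is the symmetric-difference picture of two distinct perfect matchings on the same vertex set.

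\textbf{Compatible implies equal matchings.} Let $G_1, G_2$ be acyclic and compatible, and suppose both contain perfect matchings $M_1 \subseteq G_1$ and $M_2 \subseteq G_2$ between $I$ and $\bar{J}$. If $M_1 \neq M_2$, then $M_1 \triangle M_2$ is a nonempty disjoint union of even cycles alternating between $M_1$ and $M_2$. Any such cycle $C$ alternates between edges of $G_1$ and edges of $G_2$, so compatibility forces $C \subseteq G_1$. This contradicts acyclicity of $G_1$. Hence $M_1 = M_2$.

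\textbf{Equal matchings implies compatible.} Suppose $G_1$ and $G_2$ satisfy the matching condition, and let $C$ be a cycle alternating between edges of $G_1$ and edges of $G_2$. By the footnote to the definition of compatibility, we may assume $C$ is simple, since a closed alternating walk that repeats vertices splits into shorter alternating closed walks. A simple cycle $C$ with vertex set $I \sqcup \bar{J}$ has even length $2m$ with $|I| = |\bar{J}| = m$. Its $G_1$-edges form a perfect matching $M_1$ of $I$ with $\bar{J}$, and its $G_2$-edges form a perfect matching $M_2$ of the same pair. The hypothesis gives $M_1 = M_2$, but $M_1$ and $M_2$ are disjoint edge sets, both nonempty because $m \ge 1$. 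This is a contradiction, so no alternating cycle exists and compatibility holds vacuously.

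\textbf{Main obstacle.} The delicate point is the definition of ``alternating cycle.'' A cycle may use an edge lying in $G_1 \cap G_2$, counted once as a $G_1$-edge and elsewhere as a $G_2$-edge. We must check that the decomposition of a non-simple alternating walk into simple alternating cycles preserves the alternation, so that the reduction to simple cycles in the second direction is legitimate. Once we restrict to simple cycles, each edge is used once, and its role in the alternation determines which matching it belongs to.

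Acyclicity enters only in the first direction. There the argument would fail for graphs with cycles, since compatibility then allows alternating cycles lying in both graphs.
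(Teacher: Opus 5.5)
Your proof is correct. The paper states this proposition without proof, treating it as an immediate simplification of compatibility for forests. Your argument is the natural justification: two distinct perfect matchings on the same pair $(I,\bar{J})$ have a symmetric difference that is a union of alternating cycles, and conversely the two colour classes of a simple alternating cycle are two disjoint perfect matchings of its vertex set.

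The loose end you flag closes easily. Because the graph is bipartite, a repeated vertex of a closed alternating walk recurs at positions of the same parity. Splitting there therefore yields two shorter closed walks that still alternate. The irreducible pieces are of two kinds:
\begin{itemize}
\item simple alternating cycles of length at least $4$, which your argument excludes;
\item a single edge traversed once as a $G_1$-edge and once as a $G_2$-edge, which forces that edge into $G_1\cap G_2$.
\end{itemize}
So in the second direction the precise conclusion is not that no alternating closed walk exists. It is that every such walk consists of edges of $G_1\cap G_2$ and is therefore contained in both graphs, which still gives compatibility.
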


(Generalized) tropical oriented matroids that correspond to triangulations are also called \emph{generic}, as they correspond to generic tropical pseudo-hyperplane arrangements.

When $G = K_{n,d}$, Oh and Yoo defined \emph{matching ensembles}, a collection of perfect matchings that encode the entire triangulation.

\begin{definition}[\cite{FieldLattice}, Definition 5.1]\label{def:ensemble}
    For positive integers $n$ and $d$, a \emph{$(n,d)$-matching stack} $\mathcal{M}$ is a collection of bijections (or \emph{partial matchings}) $M_{I, \bar{J}}$ between $I$ and $\bar{J}$ for every pair of subsets $I\subseteq [n]$ and $\bar{J}\subseteq [\bar{d}]$ such that $|I| = |\bar{J}|$. 

    A matching stack is a \emph{matching ensemble} if the following conditions hold: \begin{enumerate}
        \item (Closure) Given $I'\subseteq I$ and $\bar{J}'\subseteq \bar{J}$, if $M_{I, \bar{J}}$ contains a matching between $I'$ and $\bar{J}'$, then $M_{I', \bar{J}'} \subseteq M_{I, \bar{J}}$. (Equivalently, all matchings are pairwise compatible.)
        \item (Left linkage) Given subsets $I\subseteq [n]$ and $\bar{J}\subseteq [\bar{d}]$ such that $|I| = |\bar{J}| + 1$, the union of $M_{I', \bar{J}}$ over all $I'\subset I$ of size $|\bar{J}|$ is a tree $\mathbb{T}_{I, \bar{J}}$ whose right degree vector is equal to $\mathbf{2}_{\bar{J}}$.
        \item (Right linkage) Given subsets $I\subseteq [n]$ and $\bar{J}\subseteq [\bar{d}]$ such that $|I| + 1 = |\bar{J}|$, the union of $M_{I, \bar{J}'}$ over all $\bar{J}'\subset \bar{J}$ of size $|I|$ is a tree $\bar{\mathbb{T}}_{I, \bar{J}}$ whose left degree vector is equal to $\mathbf{2}_{I}$.
    \end{enumerate}

    The trees $\mathbb{T}_{I, \bar{J}}$ and $\bar{\mathbb{T}}_{I, \bar{J}}$ are called \emph{(left/right) linkage covectors}.
\end{definition}

The recently established connections between generic (non-generalized) tropical oriented matroids and matching ensembles in \cite{yao2025linkage} motivates defining a generalization of matching ensembles that is cryptomorphic to generic GTOMs.

To this end, we first establish a generalization of tope-linkage property from Lemma 3.10 of \cite{oh2013ensembles}, which roughly states that ``in a fine mixed subdivision of $n\Delta^{d-1}$, for any lattice point that is not $n\cdot e_{\bar{j}}$, there is an edge connecting it to a point one step further in the $e_{\bar{j}}$ direction''. In the generalized version, the point $n\cdot e_{\bar{j}}$ may not be a point in $P_G$, so we need to replace it by all points that are furthest in the $e_{\bar{j}}$ direction.

\begin{definition}
    For a $G$-tropical oriented matroid $\mathcal{O}$ and a proper nonempty subset $\bar{J}\subset [\bar{d}]$, the \emph{$\bar{J}$-boundary} of $G$ is the graph $G_{(\bar{J}, [\bar{d}]\setminus \bar{J})}$ obtained by a coarse refinement of $G$ with respect to $(\bar{J}, [\bar{d}]\setminus \bar{J})$, and the \emph{$\bar{J}$-boundary} of $\mathcal{O}$ is the set of all types in $\mathcal{O}$ that are subgraphs of $G_{(\bar{J}, [\bar{d}]\setminus \bar{J})}$.
\end{definition}

In the language of mixed subdivisions, the $\bar{J}$-boundary of a $G$-tropical oriented matroid corresponds to the face of $P_G$ supported by the vector $\mathbf{1}_{\bar{J}}$, which is clearly a mixed subdivision itself. Therefore, the $\bar{J}$-boundary of a $G$-tropical oriented matroid is a $G_{(\bar{J}, [\bar{d}]\setminus \bar{J})}$-tropical oriented matroid.

\begin{proposition}[Generalized tope-linkage]\label{prop:gen_link}
    For a generic $G$-tropical oriented matroid $\mathcal{O}$, an index $\bar{j}\in [\bar{d}]$, and a tope $T\in \mathcal{O}$ that is not on the $\{\bar{j}\}$-boundary of $\mathcal{O}$, there is another tope $T'\in \mathcal{O}$ that can be obtained by replacing one coordinate $\bar{j}_0\neq \bar{j}$ in $T$ with $\bar{j}$.
\end{proposition}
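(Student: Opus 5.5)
We reduce to a local statement about the cell containing a suitable edge of the fine mixed subdivision, using the bijection of \cref{thm:main} and working in the triangulation of $Q_G$ (equivalently the fine mixed subdivision of $P_G$). A tope $T$ is a spanning subgraph of $G$ in which every $i\in[n]$ has degree one, so it is a vertex $v_T=\sum_i e_{T_i}$ of the subdivision. The claim is then that some edge of the subdivision leaves $v_T$ in a direction $e_{\bar j}-e_{\bar j_0}$ with $\bar j_0\neq\bar j$: such an edge is a one-dimensional Minkowski cell in which exactly one summand $i$ is the segment $\mathrm{conv}(e_{T_i},e_{\bar j})$, and its other endpoint is the desired tope $T'$.

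\textbf{Step 1: choose the cell.} Since $T$ is not on the $\{\bar j\}$-boundary, some position $i$ has $T_i\neq\bar j$ but $\bar j\in G_i$. Hence $v_T$ is not on the face of $P_G$ maximizing $x_{\bar j}$, and the ray $v_T+t(e_{\bar j}-\frac1d\mathbf 1)$ enters the interior of $P_G$ (or of its affine hull). Take the fine cell $C=P_{A}$ containing the initial segment of this ray (after a small generic perturbation of the direction, if needed). Here $A$ is a spanning tree of $G$ (a spanning forest if $G$ is disconnected, handled componentwise by \cref{prop:gtom_decomp}), and $T$ is a total refinement of $A$ because $v_T$ is a vertex of $C$.

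\textbf{Step 2: work inside the simplex cell.} Because $A$ is a tree, $C$ is a fine Minkowski cell, a product-like simplex-sum whose edges at $v_T$ are exactly the moves changing one coordinate $T_i$ to another neighbor of $i$ in $A$ that yields a refinement of $A$. By the tope-linkage for the ordinary case (Lemma 3.10 of \cite{oh2013ensembles}), applied to the single cell viewed as a $A$-TOM, or by a direct argument, we show that $C$ has an edge at $v_T$ increasing $x_{\bar j}$. Orient $A$ by the refinement order giving $T$. The direction $e_{\bar j}$ lies in the tangent cone of $C$ at $v_T$, and that cone is generated by the edge directions $e_{\bar k}-e_{T_i}$ over edges $(i,\bar k)$ of $A$ that are ``later'' than $T_i$ in the ordering. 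Some generator must have positive $\bar j$-component, so it is of the form $e_{\bar j}-e_{T_i}$. That generator gives $T'$ by replacing $T_i$ with $\bar j$, and $T'\in\mathcal O$ as a refinement of $A$ by the Surrounding axiom.

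\textbf{Main obstacle.} The hard part is Step 2: showing that the tangent cone of the tree cell at $v_T$ has an edge generator whose positive coordinate is exactly $\bar j$, rather than merely having $e_{\bar j}$ as a positive combination of generators. I would try this first. Take a generic linear functional $x_{\bar j}-\epsilon\ell$ and apply the simplex method on $C$: the vertex $v_T$ is not optimal, since $C$ meets the interior direction chosen in Step 1. The improving edges at $v_T$ of a Minkowski sum of simplices correspond to single-coordinate swaps $T_i\to\bar k$ with $(i,\bar k)\in A$. A swap improves $x_{\bar j}-\epsilon\ell$ only if $\bar k=\bar j$, because any other swap leaves $x_{\bar j}$ unchanged or lowers it, and the $\epsilon$ term cannot compensate. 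If the tree structure makes vertices of $C$ more than single-coordinate swaps apart, the fallback is to argue purely combinatorially. In that case I would take the path in $A$ from $\bar j$ to $T_i$ and use the refinement order to show that the edge $(i,\bar j)$ can be adjoined to $T$ to give the first edge of this path.
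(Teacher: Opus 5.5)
\textbf{Step 1 fails as written.} That $v_T$ is off the face of $P_G$ maximizing $x_{\bar j}$ does not imply that the particular direction $e_{\bar j}-\frac1d\mathbf 1$ lies in the tangent cone of $P_G$ at $v_T$. If $v_T$ sits on another facet that this direction crosses, the ray leaves $P_G$ immediately, and no small perturbation helps. Already for $G=K_{1,3}$ (so $P_G=\Delta^2$), $\bar j=\bar 1$ and $T=(\bar 2)$, the tope is off the $\{\bar 1\}$-boundary. Yet $e_{\bar 2}+t(e_{\bar 1}-\frac13\mathbf 1)$ has $\bar 3$-coordinate $-t/3<0$ for every $t>0$. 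This is exactly the case the paper splits off and treats by induction: a tope on a $\bar J$-boundary with $\bar j\in\bar J$. Your argument as written misses it.

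The repair is cheap. Since $T$ is off the $\{\bar j\}$-boundary, $(v_T)_{\bar j}<|\{i:\bar j\in G_i\}|=\max_{P_G}x_{\bar j}$. So pick $p\in P_G$ with $p_{\bar j}>(v_T)_{\bar j}$ and set $w=p-v_T$. By convexity, and because finitely many closed cells cover $P_G$, some cell $C\ni v_T$ contains an initial segment of $v_T+tw$. Step 2 only needs some tangent direction $w$ of $C$ at $v_T$ with $w_{\bar j}>0$, not $e_{\bar j}$ itself.

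\textbf{Your ``main obstacle'' is not one.} Every edge direction of $C$ at $v_T$ has the form $e_{\bar k}-e_{T_i}$, which has a single positive coordinate. So a nonnegative combination of them with positive $\bar j$-coordinate must use an edge with $\bar k=\bar j$. Equivalently, run the improving-edge argument with the functional $x_{\bar j}$ itself. Two side remarks in that paragraph are wrong. First, the claim that the $\epsilon\ell$ term ``cannot compensate'' is false: an $x_{\bar j}$-neutral swap is improving exactly when it improves $-\epsilon\ell$. Second, applying Lemma 3.10 of Oh--Yoo to the cell viewed as an $A$-TOM is circular.

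What does deserve a line of proof is the description of the edges of $C$. The tangent cone of $\sum_i\Delta_{A_i}$ at $v_T$ is $\sum_i\mathrm{cone}\{e_{\bar k}-e_{T_i}:\bar k\in A_i\}$. For a spanning tree (or forest) $A$ there are $|E(A)|-n=\dim C$ such generators, so they are linearly independent. Hence $v_T$ is a simple vertex, and each generator spans an edge whose Minkowski decomposition has the single nontrivial summand $[e_{T_i},e_{\bar k}]$. The other endpoint of that edge is a vertex of the subdivision labeled by the swap. In particular, your fallback worry about vertices of $C$ being several swaps apart cannot arise.

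\textbf{Comparison with the paper.} With the repaired Step 1 and this justification, you have a complete proof that genuinely differs from the paper's. The paper handles boundary topes by induction, decomposing the $\bar J$-boundary into smaller GTOMs. It handles interior topes by extending to a fine mixed subdivision of $n\Delta^{d-1}$, importing Oh--Yoo's Lemma 3.10, and checking that $T'$ stays in $P_G$. Your local tangent-cone argument treats all topes uniformly and needs neither the extension nor the citation. It uses genericity only through the simplicity of the fine cells.
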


\begin{proof}
    From the previous section, we may assume that $G$ is connected (and hence $P_G$ is full-dimensional), since otherwise we can work with just the connected component containing $\bar{j}$. Let $\mathcal{C}$ be the fine mixed subdivision of $P_G$ corresponding to $\mathcal{O}$, where $T$ corresponds to a lattice point in $P_G$. 
    
    If $T$ is on some $\bar{J}$-boundary of $\mathcal{O}$ where $\bar{j}\in \bar{J}$, then it suffices to find $T'$ in the same $\bar{J}$-boundary. Observe that $G_{(\bar{J}, [\bar{d}]\setminus \bar{J})}$ is a disconnected graph since each vertex $i\in [n]$ is either connected only to vertices in $\bar{J}$ or connected only to vertices not in $\bar{J}$, so we can decompose the $\bar{J}$-boundary of $\mathcal{O}$ into a sum of GTOMs of lower dimension, one of which contains $\bar{j}$. Note that this smaller GTOM necessarily contains some other index of $\bar{J}$, or else $T$ would lie on the $\{\bar{j}\}$-boundary of $\mathcal{O}$. Using induction on dimension of $P_G$ (where the cases $d \leq 2$ are easy), we can find a desired tope in the smaller GTOM with $\bar{j}$, which translates to the desired $T'$.
    
    Otherwise, using the proof of \cref{prop:sub_to_gtom}, we can extend $\mathcal{C}$ into a fine mixed subdivision $\hat{\mathcal{C}}$ of $n\Delta^{d-1}$ and hence an ordinary $(n, d)$-tropical oriented matroid $\hat{\mathcal{O}}$ that extends $\mathcal{O}$. 
    Lemma 3.10 of \cite{oh2013ensembles} furnishes a tope $T'\in \hat{\mathcal{O}}$ that satisfies the desired properties; since $T$ is not in any $\bar{J}$-boundary of $\mathcal{O}$ where $\bar{j}\in \bar{J}$, it is not difficult to see that $T'$ will also be in $\mathcal{O}$ since its corresponding point will remain in $P_G$.
\end{proof}

\begin{figure}[h!]
    \centering
    \includegraphics[height=3.5in]{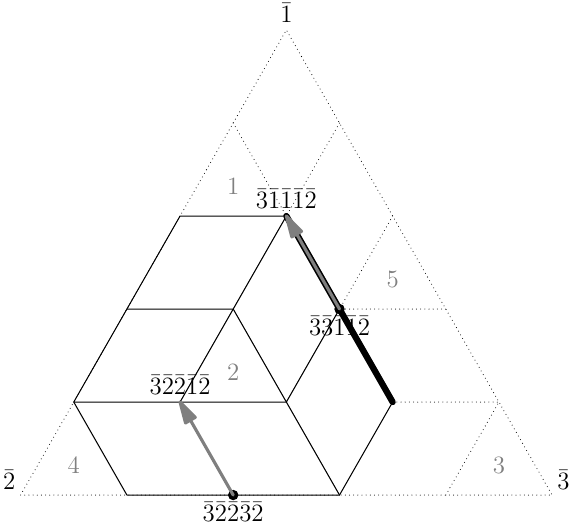}

    \caption{Two examples of the generalized tope-linkage property, where for each tope marked with a dot we find another tope that is one step further in the $e_{\bar{1}}$ direction (marked with gray arrows). The dotted lines show an extension of the fine mixed subdivision of $P_G$ (in thin solid lines) to a fine mixed subdivision of $5\Delta^2$ in which we can invoke ordinary tope-linkage. The tope $\bar{3}\bar{3}\bar{1}\bar{1}\bar{2}$ lies on the $\{\bar{1}, \bar{3}\}$-boundary of $P_G$, so it suffices to show tope-linkage along this boundary (marked with bold edge).}
\end{figure}

For generalizing matching ensembles, one must note that since not all pairs of subsets of vertices may have a matching in $G$, the collections of partial matchings are only defined for the pairs that do, and hence the left/right linkage axioms from \cref{def:ensemble} need to be modified accordingly.

\begin{definition}[\cite{galashin2018trianguloids}, Section 4]
    Given a subgraph $G$ of $K_{n,d}$, define the \emph{matching support set} $IJ_G$ to be the set of all pairs $(I, \bar{J})$ of subsets $I\subseteq [n]$ and $\bar{J}\subseteq [\bar{d}]$ such that there exists a partial matching between $I$ and $\bar{J}$ in $G$.
\end{definition}

\begin{definition}
    For a subgraph $G$ of $K_{n,d}$, a \emph{$G$-matching stack} $\mathcal{M}$ is a collection of partial matchings $M_{I, \bar{J}}$ between $I$ and $\bar{J}$ for every pair $(I, \bar{J})\in IJ_G$, such that each $M_{I, \bar{J}}$ is a subgraph of $G$.

    A $G$-matching stack is a \emph{$G$-matching ensemble} if the following conditions hold: \begin{enumerate}
        \item (Closure) Given $I'\subseteq I$ and $\bar{J}'\subseteq \bar{J}$, if $M_{I, \bar{J}}$ contains a matching between $I'$ and $\bar{J}'$, then $M_{I', \bar{J}'} \subseteq M_{I, \bar{J}}$. (Equivalently, all matchings are pairwise compatible.)
        \item (Left linkage) Given subsets $I\subseteq [n]$ and $\bar{J}\subseteq [\bar{d}]$ such that $|I| = |\bar{J}| + 1$, the union of $M_{I', \bar{J}}$ over all $I'\subset I$ for which $(I', \bar{J})\in IJ_G$ is a forest $\mathbb{T}_{I, \bar{J}}$.
        \item (Right linkage) Given subsets $I\subseteq [n]$ and $\bar{J}\subseteq [\bar{d}]$ such that $|I| + 1 = |\bar{J}|$, the union of $M_{I, \bar{J}'}$ over all $\bar{J}'\subset \bar{J}$ for which $(I, \bar{J}')\in IJ_G$ is a forest $\bar{\mathbb{T}}_{I, \bar{J}}$.
    \end{enumerate}

    The forests $\mathbb{T}_{I, \bar{J}}$ and $\bar{\mathbb{T}}_{I, \bar{J}}$ are called \emph{(left/right) linkage covectors}.
\end{definition}

\begin{remark}\label{rem:gen_linkage}
    It is not difficult to see that the left degrees of the right linkage covector $\bar{\mathbb{T}}_{I, \bar{J}}$ are all at most $2$. Therefore, unless the right linkage covector is empty, it consists of a ``normal'' right linkage tree plus some vertex-disjoint edges representing shared matches across all relevant partial matchings. Therefore, much of the combinatorics of ordinary linkage axioms directly carry over as long as we disregard these additional edges.

    \begin{figure}[h!]
    \centering
    \includegraphics[height=2in]{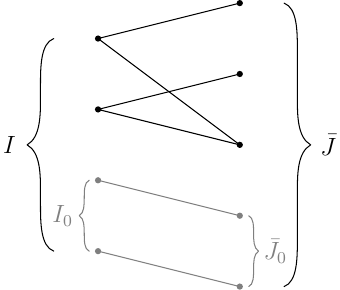}
    \caption{A right linkage covector $\bar{\mathbb{T}}_{I, \bar{J}}$ in a $G$-matching ensemble. The shared edges across matchings are in gray.}
    \end{figure}
    
    For a non-empty right linkage covector $\bar{\mathbb{T}}_{I, \bar{J}}$, suppose that $I_0$ is the set of vertices in $I$ with degree $1$ and $\bar{J}_0$ is the set of vertices matched with $I_0$ (which must also all have degree $1$), then $(I, \bar{J}')\in IJ_G$ for $\bar{J}'\subset \bar{J}$ if and only if $|I| = |\bar{J}'|$ and $\bar{J}_0\subseteq \bar{J}'$. It is also clear that there can be no edges at all between $I_0$ and $\bar{J}\setminus \bar{J}_0$ in $G$: if there is such an edge $(i, \bar{j})$, then there exists a partial matching between $I$ and $\bar{J}\setminus \{\bar{j}'\}$ using this edge and the edges in $\bar{\mathbb{T}}_{I, \bar{J}}$, where $\bar{j}'\in \bar{J}_0$ that is matched with $i$ in $\bar{\mathbb{T}}_{I, \bar{J}}$, which contradicts the fact that $(i, \bar{j})$ is not in $\bar{\mathbb{T}}_{I, \bar{J}}$.

    From this, we can use the equivalence of linkage axioms established in \cite{Fields} (also summarized in Section 2.3 of \cite{yao2025linkage}) to conclude that right linkage is equivalent to the following condition, which much more closely resemble the one used in the original definition in \cite{oh2013ensembles}: \begin{itemize}
        \item (Weak right exchange) Given $(I, \bar{J})\in IJ_G$ and $\bar{j}'\in [\bar{d}]\setminus \bar{J}$ that is connected to at least one vertex of $I$ in $G$, there exists an edge $(i, \bar{j})$ in $M_{I, \bar{J}}$ such that replacing it with $(i, \bar{j}')$ gives another matching in $\mathcal{M}$.
    \end{itemize}
    
    The symmetric versions of the statements above also hold for the left linkage covector $\mathbb{T}_{I, \bar{J}}$.
\end{remark}

The same \emph{extraction method} as described in \cite{oh2013ensembles} now allows us to obtain a $G$-matching ensemble from a generic $G$-tropical oriented matroid.

\begin{proposition}
    Given a generic $G$-tropical oriented matroid $\mathcal{O}$, for any $I\subseteq [n]$ and $\bar{J}\subseteq [\bar{d}]$ for which $(I, \bar{J})\in IJ_G$, let $M_{I, \bar{J}}$ be the unique tope of $\mathcal{O}|_{I, \bar{J}}$ that is a perfect matching. The matchings $\{M_{I, \bar{J}}\}$ form a $G$-matching ensemble $\mathcal{M}$.
\end{proposition}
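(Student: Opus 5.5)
We need three things: that the minor $\mathcal{O}|_{I,\bar{J}}$ has exactly one tope that is a perfect matching, that the resulting matchings satisfy Closure, and that they satisfy the two linkage conditions.

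\textbf{Well-definedness.} Since $(I,\bar{J})\in IJ_G$, every $i\in I$ has a neighbor in $\bar{J}$. By the proposition on minors, $\mathcal{O}|_{I,\bar{J}}$ is then a generic $G|_{I,\bar{J}}$-tropical oriented matroid. By \cref{thm:main} it corresponds to a fine mixed subdivision of $P_{G|_{I,\bar{J}}}$, equivalently a triangulation of $Q_{G|_{I,\bar{J}}}$.

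A tope that is a perfect matching is a lattice point of $P_{G|_{I,\bar{J}}}$, namely the point $\mathbf{1}_{\bar{J}}$. This point lies in $P_{G|_{I,\bar{J}}}$ because a matching exists.

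For existence, I would use the Cayley trick. The point $\frac{1}{|I|}(\mathbf{1}_I,\mathbf{1}_{\bar{J}})$ lies in $Q$ and hence in some simplex $Q_T$, where $T$ is a spanning tree. Writing it as a convex combination of the edges of $T$, and using that $T$ is a forest, forces the support to be a perfect matching contained in $T$. In the mixed subdivision, $\mathbf{1}_{\bar{J}}$ is then a vertex of the fine cell, whose type refines to that matching. Since $\mathcal{O}$ is closed under refinement (Surrounding), the matching is a tope.

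For uniqueness, two distinct perfect matchings on $I\sqcup\bar{J}$ form an alternating cycle contained in neither of them. This contradicts Comparability.

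\textbf{Closure.} Suppose $M_{I,\bar{J}}$ contains a matching $N$ between $I'$ and $\bar{J}'$. Restrict $M_{I,\bar{J}}$ to the $(I',\bar{J}')$-minor: deleting $I\setminus I'$ and contracting $\bar{J}\setminus\bar{J}'$ keeps types that avoid the contracted columns. Here I must check that contraction keeps the type $M_{I,\bar{J}}$ restricted to $I'$. This holds because the positions of $I'$ only use $\bar{J}'$, and the other positions are deleted first; the minor is taken in the order deletion, then contraction. The restriction is $N$, a perfect-matching tope of $\mathcal{O}|_{I',\bar{J}'}$. By uniqueness $N=M_{I',\bar{J}'}$.

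\textbf{Linkage.} This is the main obstacle. By symmetry in the argument, though not in the axioms, I would treat right linkage first. By \cref{rem:gen_linkage} it suffices to prove weak right exchange. Take $(I,\bar{J})\in IJ_G$ and $\bar{j}'\notin\bar{J}$ adjacent to $I$. Work in the minor $\mathcal{O}|_{I,\bar{J}\cup\{\bar{j}'\}}$, a generic GTOM in which $M_{I,\bar{J}}$ is a tope. This tope is not on the $\{\bar{j}'\}$-boundary, since some $i\in I$ is adjacent to $\bar{j}'$ and so the boundary would require that position to be $\bar{j}'$; this needs a careful check of the boundary definition. \cref{prop:gen_link} then gives a tope obtained by replacing one coordinate with $\bar{j}'$. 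That tope is a perfect matching between $I$ and $(\bar{J}\setminus\{\bar{j}\})\cup\{\bar{j}'\}$, so by uniqueness it equals the corresponding $M$. This is exactly weak right exchange.

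For left linkage, tope-linkage has no direct left analogue. I would try the following. In $\mathcal{O}|_{I,\bar{J}}$ with $|I|=|\bar{J}|+1$, the matchings $M_{I\setminus\{i\},\bar{J}}$ arise by deleting position $i$ from topes of the form ``matching plus one extra coordinate''. Their union should be a cell-type: choose a coarse refinement or a point in the dual arrangement, and show that the union is a type of the triangulation. Such a type is a subgraph of a spanning tree, hence a forest.

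Concretely, I would take the vertex of the dual arrangement that corresponds to the simplex containing the point $\frac{1}{|I|}(\mathbf{1}_I,\sum_i\cdots)$. Alternatively, I would use the fact that the union of compatible matchings with equal right side cannot contain a cycle: an alternating cycle between two of them would violate Comparability, but a cycle mixing three or more matchings needs a more global argument. If this direct approach fails, I would fall back on proving left linkage via the equivalence in \cite{Fields} from the symmetric weak left exchange. That exchange I would try to derive by applying elimination at the new position $i$ between $M_{I',\bar{J}}$ padded by a boundary type and compatibility arguments as in \cref{lem:agree}.
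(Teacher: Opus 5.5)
Your well-definedness argument, Closure, and right linkage coincide with the paper's proof. For right linkage you reduce to weak right exchange and apply \cref{prop:gen_link} in $\mathcal{O}|_{I,\bar J\cup\{\bar j'\}}$. The boundary check you flagged is fine: the coarse refinement sends every position adjacent to $\bar j'$ to $\{\bar j'\}$.

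\textbf{The gap is left linkage, which your proposal never proves.} Each route you offer falls short:
\begin{itemize}
\item Saying that the union of the $M_{I\setminus\{i\},\bar J}$ ``should be'' a type of the triangulation restates what must be shown; it does not argue for it.
\item Pairwise compatibility cannot exclude cycles passing through three or more matchings, as you concede.
\item The elimination fallback is only an intention.
\end{itemize}
Your diagnosis that tope-linkage ``has no direct left analogue'' is also misplaced. The left side is the easy one: adding a position only undoes a deletion. Adding an element of $[\bar d]$ undoes a contraction, and that is the step that needs tope-linkage.

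The paper closes this by duality. Swap the two blocks of coordinates, i.e.\ exchange the roles of $[n]$ and $[\bar d]$ to get a graph $G'$. A triangulation of $Q_G$ then becomes a triangulation of $Q_{G'}$, and by \cref{thm:main} this gives a generic $G'$-tropical oriented matroid. Your own existence argument shows that $M_{I,\bar J}$ is intrinsic to the triangulation. It is the matching whose face is the unique face containing $\frac{1}{|I|}(\mathbf{1}_I,\mathbf{1}_{\bar J})$ in its relative interior, and this description is symmetric. So the transposed matroid extracts the same matchings, and left linkage for $\mathcal{M}$ is right linkage for the transposed ensemble.

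Alternatively, you can prove weak left exchange directly, with no elimination:
\begin{itemize}
\item Take $i'\notin I$ adjacent in $G$ to some vertex of $\bar J$.
\item The $i'$-deletion of $\mathcal{O}|_{I\cup\{i'\},\bar J}$ is a generic GTOM on $G|_{I,\bar J}$ contained in $\mathcal{O}|_{I,\bar J}$. By your existence and uniqueness argument, some type of $\mathcal{O}|_{I\cup\{i'\},\bar J}$ therefore restricts to $M_{I,\bar J}$ on $I$.
\item A total refinement of that type makes position $i'$ a singleton $\{\bar j\}$ and leaves the other, already singleton, positions unchanged.
\item Delete the position $i$ that $M_{I,\bar J}$ matches to $\bar j$. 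This gives a perfect-matching tope of $\mathcal{O}|_{(I\setminus\{i\})\cup\{i'\},\bar J}$.
\item By uniqueness, that tope equals $M_{I,\bar J}-(i,\bar j)+(i',\bar j)$, which is exactly weak left exchange.
\end{itemize}
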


\begin{proof}
    It is clear that closure holds for $\mathcal{M}$, so by matroid duality it suffices to show that $\mathcal{M}$ satisfies the weak right exchange property in the above remark. The same arguments in the proof of \cite{oh2013ensembles}, Proposition 4.3 work essentially verbatim for this purpose; the only change of note is that the added requirement that $\bar{j}'$ being connected to at least one vertex of $I$ in $G$ is equivalent to the condition that $M_{I, \bar{J}}$ is not on the $\{\bar{j}'\}$-boundary of $\mathcal{M}|_{I, [\bar{d}]}$ for us to use generalized tope-linkage (\cref{prop:gen_link}) instead of ordinary tope-linkage in the original proof.
\end{proof}

The reverse direction of showing that all $G$-matching ensembles encode generic $G$-tropical oriented matroids follows the same path as outlined in \cite{yao2025linkage} with minimal modifications (as noted in \cref{rem:gen_linkage}), giving us a full cryptomorphism:

\begin{theorem}
    Generic $G$-tropical oriented matroids are in bijection with $G$-matching ensembles.
\end{theorem}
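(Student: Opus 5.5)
The plan is to construct mutually inverse maps. The forward map (generic GTOM $\to$ $G$-matching ensemble) is the extraction method of the preceding proposition. The backward map sends a $G$-matching ensemble $\mathcal{M}$ to a collection of spanning trees of $G$, which should be the cells of a triangulation of $Q_G$. By \cref{thm:main}, this triangulation then determines a generic GTOM. Concretely, following \cite{yao2025linkage}, I would define the candidate topes to be the graphs $T$ (one edge per position $i\in[n]$) such that every matching contained in $T$ between some $I$ and $\bar{J}$ equals $M_{I,\bar{J}}$. The candidate cells are the spanning trees of $G$ all of whose partial matchings agree with $\mathcal{M}$. Equivalently, these are the maximal unions of compatible topes. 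Compatibility of these trees is then immediate from the Closure axiom together with the proposition characterizing compatibility of acyclic graphs through their common perfect matchings.

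The core step is the facet-sharing condition of \cref{thm:subdiv_graphs}, stated for trees. Removing an edge from a cell tree gives a forest $H$ with two components $I_1\sqcup\bar{J}_1$ and $I_2\sqcup\bar{J}_2$. If $Q_H$ is interior, then by \cref{prop:facet} $G$ has edges both from $I_1$ to $\bar{J}_2$ and from $I_2$ to $\bar{J}_1$. We must then produce a second tree in the collection containing $H$ and using an edge of the other type. Here I would reuse the argument of \cite{yao2025linkage}, which derives this ``flip'' from the linkage axioms. By \cref{rem:gen_linkage}, a generalized linkage covector is an ordinary linkage tree plus some vertex-disjoint shared edges. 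These shared edges are forced in every relevant matching, so the argument should carry over after contracting them. The hypothesis in the weak exchange axioms that $\bar{j}'$ is adjacent to $I$ in $G$ is exactly what is available, because the new edge must lie in $G$. In addition, I would check that the collection is nonempty and that the trees cover $Q_G$. Facet-sharing together with connectivity of the dual graph should give covering, and nonemptiness can be handled by building one tree greedily from the boundary matchings.

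Finally, I would check that the two maps are inverse to each other. Starting from a triangulation, the extracted matching $M_{I,\bar{J}}$ is the unique perfect-matching tope of the minor. Conversely, every tree built from $\mathcal{M}$ reproduces $\mathcal{M}$ by Closure, and matching ensembles are determined by their matchings.

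I expect the main obstacle to be the facet-sharing step for generalized linkage covectors. The difficulty is the pairs $(I,\bar{J})\notin IJ_G$, which have no matching: the inductive exchange arguments of \cite{yao2025linkage} may pass through such pairs, and then a matching they rely on does not exist. My first attempt would be to restrict to the connected case using the direct-sum decomposition of \cref{prop:gtom_decomp}. I would then argue via \cref{rem:gen_linkage} that whenever the original argument invokes a matching on $(I,\bar{J}')$, the set $\bar{J}'$ contains $\bar{J}_0$, so the pair lies in $IJ_G$.
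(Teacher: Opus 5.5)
Your proposal follows essentially the same route as the paper. The forward map is the extraction method, justified through weak right exchange and the generalized tope-linkage of \cref{prop:gen_link}. The reverse direction reruns the argument of \cite{yao2025linkage} with the modifications of \cref{rem:gen_linkage}: you disregard the vertex-disjoint shared edges of a linkage covector, and use that every relevant $\bar{J}'$ contains $\bar{J}_0$, so the pairs invoked lie in $IJ_G$.

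Packaging the reverse direction as a triangulation of $Q_G$, checked via \cref{thm:subdiv_graphs} and then fed into \cref{thm:main}, is fine. Be aware, though, that the substantive steps are imported from \cite{yao2025linkage} rather than proved, exactly as in the paper. These are the flip that gives facet-sharing, and the existence of at least one admissible spanning tree, which is not evidently obtainable by a greedy choice.
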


\bigskip

\paragraph{\bfseries Acknowledgments.} 
The bulk of this project was performed during PKU Algebra and Combinatorics Experience (PACE) 2025 at Beijing International Center for Mathematical Research (BICMR). We would like to thank Yibo Gao for hosting the program and supporting the project.

\bibliographystyle{alpha}
\bibliography{refs}

\end{document}